\renewcommand{\section}{%
\@startsection{section}{1}{\z@}%
 {-3.5ex \@plus -1ex \@minus -.2ex}%
                                   {2.3ex \@plus.2ex}%
{\reset@font%\centering
\large\bfseries}}
\renewcommand{\thesection}{\@arabic\c@section}
\newcommand{\rmv}{\smallsetminus}
\newcommand{\bd}{\partial}
\newcommand{\set}[1]{ \left\{ #1 \right\} }
\newcommand{\AC}{\mathcal{A}}
\newcommand{\OAC}{\mathcal{A}^O}
\newcommand{\T}{\mathcal{T}}
\title[Generalized Crossing Changes between GOF-knots]
{COMPLETE CLASSIFICATION OF \\GENERALIZED CROSSING CHANGES BETWEEN GOF-KNOTS}
\author{Kai Ishihara and Matt Rathbun}
\address{Kai Ishihara
\newline Yamaguchi University
\newline 1677-1 YoshidaYamaguchi 753-8513
\newline Japan%}
%\email{
\newline e-mail: kisihara@yamaguchi-u.ac.jp}
\address{Matt Rathbun
\newline California State University, Fullerton
\newline Fullerton, CA
\newline USA%}
%\email{
\newline e-mail: mrathbun@fullerton.edu}
\newtheorem{thm}{\indent\bf Theorem}[section] %{Theorem}[section]    % Standard theorem environment
\newtheorem{lem}[thm]{\indent\bf Lemma}%{Lemma}          % Lemma environment with numbering 
\newtheorem{cor}[thm]{\indent\bf Corollary}%{Corollary}
\theoremstyle{definition}
\newtheorem{rem}[thm]{\indent\sc Remark}%{Remark}             % Unnumbered environment for remarks.
\newtheorem{example}{\indent\sc Example}
\begin{document}
\footnote{2000 Mathematics Subject Classification. Primary 57M25; Secondary 57N10.
\newline
This work was supported by JSPS KAKENHI Grant Number 17K14190.}

%%%%%%%%%%%%%%%%%%%%   Start of main body of article
%

\begin{abstract}    % type your abstract below
% Master document: Ishihara-Rathbun-OJMsubmit.tex
% Abstract

We analyze all monodromies of genus one fibered knots that possess clean or once-unclean arcs, and use this to determine all manifolds containing genus one fibered knots with generalized crossing changes resulting in another genus one fibered knot, and classify all such generalized crossing changes between two genus one, fibered knots.
\end{abstract}
\maketitle

\section{Introduction}
Every closed, orientable $3$-manifold contains a fibered knot, a knot whose exterior fibers over the circle with the knot bounding the fibers. A genus one fibered knot, or GOF-knot, is a fibered knot whose fiber is a once-punctured torus. 

A \emph{crossing circle} for a link 
$K$ is a circle $L$ that bounds a disk intersecting $K$ in two points with opposite orientations. 
We refer to the disk as a \emph{crossing disk}. 
Then, a \emph{generalized crossing change along $L$ of order $q$} is a $- \frac{1}{q}$ Dehn surgery on $L$, with $q \in \mathbb{Z} \rmv \set{0}$. Since $L$ bounds a disk, the ambient manifold  does not change, but the link may. 
When $q = \pm 1$, this is just an ordinary \emph{crossing change}.  

Let $K_1$ and $K_2$ be links 
in closed oriented $3$-manifolds $M_1$ and $M_2$, respectively. 
We say $K_1$ and $K_2$ are \emph{equivalent}, denoted $K_1\cong K_2$, if there is an orientation preserving homeomorphism from $M_1$ to $M_2$ which maps $K_1$ onto $K_2$. 
Let $L_1$ and $L_2$ be crossing circles for $K_1$ and $K_2$, respectively.
We say the generalized crossing change on $K_1$ along $L_1$ of order $q_1$ and that on $K_2$ along $L_2$ of order $q_2$ are \emph{equivalent} 
if $q_1=q_2$ and $K_1\cup L_1\cong K_2\cup L_2$.
Note that the resulting links are equivalent if the generalized crossing changes are equivalent. 
It is well known that any GOF-knot in the $3$-sphere $S^3$ is equivalent to a (left-hand or right-hand) trefoil knot or a figure eight knot 
 \cite{BurZieHeuK}. Morimoto \cite{MorGOFKLS} investigated how many GOF-knots are in a lens space, and Baker \cite{BakCG1FKLS} completes this investigation by giving a criterion for determining the exact number of GOF-knots in each lens space. 
It is also known that there is no classical (order $1$) crossing change between any two of the three GOF-knots in $S^3$. 
In this paper, we classify all generalized crossing changes between GOF-knots in any $3$-manifold.

For the classification of generalized crossing changes between GOF-knots, we introduce families of GOF-knots, see Fig. \ref{fig:Kklm}:
For any two integers $k$ and $\ell$, a knot $GOF(0;k,\ell)$ in a $3$-manifold $M(0;k,\ell)$ is obtained from the Borromean rings by $-k$ and $-\ell$ surgeries on two components.
For any integer $m$, a knot $GOF(1;m)$ in a $3$-manifold $M(1;m)$ is obtained from $L8n5$ in the Thistlethwaite link table (the mirror image of $8^3_{9}$ in the Rolfsen table\cite{RolKL}) by $0$ and $-m$ surgeries on the sub-link that is a $(2,4)$-torus link. $GOF(-1;m)$ is the mirror image of $GOF(1;-m)$.

\begin{figure}[h]
\begin{center}
\includegraphics[scale=.6]{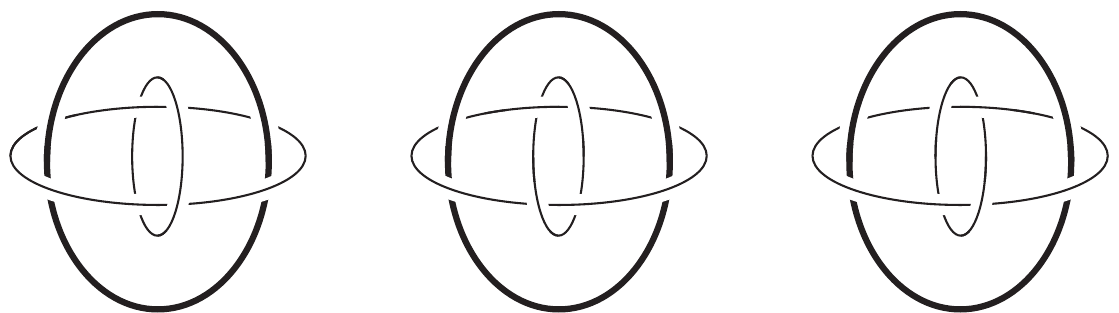}

\begin{picture}(400,0)(0,0)
\put(40,0){$GOF(0;k,\ell)$}
\put(80,77){$-\ell$}
\put(65,87){$-k$}
\put(160,0){$GOF(1;m)$}
\put(195,77){$-m$}
\put(184,87){$0$}
\put(280,0){$GOF(-1;m)$}
\put(300,87){$0$}
\put(310,77){$-m$}
\end{picture}
\caption{Families of GOF-knots.}
\label{fig:Kklm}
\end{center}
\end{figure}

\begin{restatable}{thm}{TheoremCrosingChanges}\label{thm:CrossingChanges}
Any order $q$ generalized crossing change between distinct GOF-knots is equivalent to one of the following for some integer $n$, see Fig. \ref{fig:crossingchangeGOF}.
\begin{itemize}
\item[(1)] $q=\pm2$, a generalized crossing change between $GOF(0;n,1)$ and $GOF(0;n,-1)$, 
\item[(2)] $q=\pm1$, a (classical) crossing change between $GOF(0;n,2)$ and $GOF(0;n,-2)$,  
\item[(3)] $q=\pm1$, a (classical) crossing change between $GOF(1;n-2)$ and $GOF(-1;n+2)$.  
\end{itemize}
\end{restatable}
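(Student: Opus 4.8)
The plan is to translate the geometric problem into the mapping class group of the once-punctured torus and then run a finite case analysis driven by the classification already in hand. The bridge is the dictionary set up in Section \ref{section:CrossingChanges} (building on \cite{BucIshRatShiBSCCBFL}): a generalized crossing change between two GOF-knots is realized by isotoping the crossing circle into a fiber $F$, where it becomes the curve associated to a properly embedded essential arc $\alpha$ that is clean or once-unclean with respect to the monodromy $\phi$; performing the surgery then modifies $\phi$ by a power of the Dehn twist along that curve, and the order $q$ is read off from how the twisting curve meets the knot. Thus every crossing change between distinct GOF-knots is encoded by a pair of monodromies $(\phi,\phi')$, each realizing a GOF-knot, that differ by such a twist.

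First I would record that both ends of the crossing change are constrained. By Theorem \ref{thm:AllMonodromies} each of $\phi$ and $\phi'$ is conjugate, up to inverse, to one of $D_2^{\ell}D_1^{k}$ or $(D_1D_2)^{\pm3}D_1^{m}$, i.e.\ to a member of the families $GOF(0;k,\ell)$ or $GOF(\pm1;m)$. Next, by Theorem \ref{thm:TwoEquivClasses} and Corollary \ref{cor:TwoEquivClasses}, each such monodromy carries at most two clean-or-once-unclean arcs up to equivalence, and I would locate them explicitly: an arc of slope given by a primitive vector $v$ is once-unclean exactly when the integral quadratic form $\det(v,\phi v)$ takes a value $\pm1$, and clean exactly when $\phi$ fixes the slope of $v$ (so $\phi$ is reducible, i.e.\ of trace $\pm2$). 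This reduces everything to a finite list of (monodromy, arc) pairs.

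Then, for each such pair, I would compute the twisted monodromy $\phi'$, simplify it using the relations in the mapping class group of the once-punctured torus -- the braid relation $D_1D_2D_1=D_2D_1D_2$ and the chain relation $(D_1D_2)^6=D_{\partial}$ -- and identify the resulting GOF-knot up to conjugacy, inversion, and mirror image, tracking the central boundary twist $D_{\partial}$ carefully, since this is precisely what separates the $GOF(1;\cdot)$ and $GOF(-1;\cdot)$ families appearing in case (3). Matching the outcomes against the families produces exactly the three possibilities, and the surgery pictures in Figure \ref{fig:crossingchangeGOF} realize each of them, so the list is attained; the order in each case (namely $\pm2$ for the once-unclean arc in (1) and $\pm1$ in (2) and (3)) drops out of the same computation. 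Finally I would verify ``distinct'' by comparing conjugacy invariants -- the trace already separates $GOF(0;n,\ell)$ from $GOF(0;n,-\ell)$ when $n\neq0$ -- and discard the degenerate parameters where the two monodromies coincide.

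The main obstacle is this last computational layer: pinning down, for each clean or once-unclean arc, the exact power of the Dehn twist produced by the surgery together with the correct value of $q$, and then placing the twisted element in the right family up to conjugacy, inversion, and -- for the $GOF(\pm1;m)$ knots -- mirror image and the central $D_{\partial}$-ambiguity arising from $(D_1D_2)^6=D_{\partial}$. Getting all of these normalizations right simultaneously is what simultaneously guarantees that the three families are complete and that the two knots in each are genuinely distinct; a single sign or framing error would collapse or duplicate cases.
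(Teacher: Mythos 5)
Your overall architecture matches the paper's (reduce to clean/once-unclean arcs in a fiber, compute how the surgery twists the monodromy, match against the families), but there are genuine gaps. The most serious is the one you yourself flag as ``the main obstacle'': you have no mechanism for computing the effect of the crossing change on the monodromy, and the mechanism you posit --- that the surgery ``modifies $\phi$ by a power of the Dehn twist along that curve'' --- is false in general. For a once-unclean arc $\mu$ the relevant curve $\mu\cup h(\mu)$ is \emph{immersed} with one crossing, not embedded, and the paper computes the surgery's effect via Proposition 1.4 of \cite{NiDSKPM} applied to the three curves $r$, $s$, $t$ obtained by resolving that crossing, organized by the two cases of Lemma \ref{lemma:FactorizationUnclean}. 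In the case where $\mu$ is right-veering with a positive crossing (or left-veering with a negative one), $r=s=c_\mu$ and $t$ is boundary-parallel, so the monodromy changes by post-composition with $D_{\mu}^{\mp 4} \circ D_\bd^{\pm 1}$ --- a twist along $c_\mu$ \emph{together with a full boundary twist}, which is precisely what produces the $GOF(1;n-2)\leftrightarrow GOF(-1;n+2)$ correspondence of case (3). No power of a single Dehn twist along an embedded interior curve yields this; without Ni's result (or an equivalent computation) your enumeration cannot even begin, so deferring this as an ``obstacle'' leaves out the core step of the proof.

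A second, concrete error is the off-by-one in your arc-detection criterion, which would corrupt the whole enumeration. By the intersection formula quoted in the paper ($|pq'-qp'|=n+1$ for minimal interior intersection number $n$), an arc with primitive vector $v$ is clean iff $|\det(v,\phi v)|\le 1$ and once-unclean iff $|\det(v,\phi v)|=2$; you assert once-unclean iff $\det(v,\phi v)=\pm1$ (which is actually the clean, non-fixed case) and clean iff $\phi$ fixes the slope of $v$, which would wrongly force the monodromy to be reducible --- yet the periodic monodromy $D_\alpha\circ D_\beta$ and the pseudo-Anosov $D_\beta\circ D_\alpha^{-1}$ both admit clean arcs that are not fixed. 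Finally, note that your opening ``dictionary'' (crossing circle $\Rightarrow$ clean/once-unclean arc in the fiber, with $q=\pm2$ forced for clean arcs and $q=\pm1$ for once-unclean ones) is itself a nontrivial theorem: it is Lemma \ref{lem:AllCrossingChangesAlongArcs}, proved via Gabai's sutured-manifold result and Floyd--Hatcher; it is fine to cite it, but it must be invoked as a result, not treated as part of the setup.
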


%%%%%%%%%%
\begin{figure}[h]
\begin{center}
\includegraphics[scale=.6]{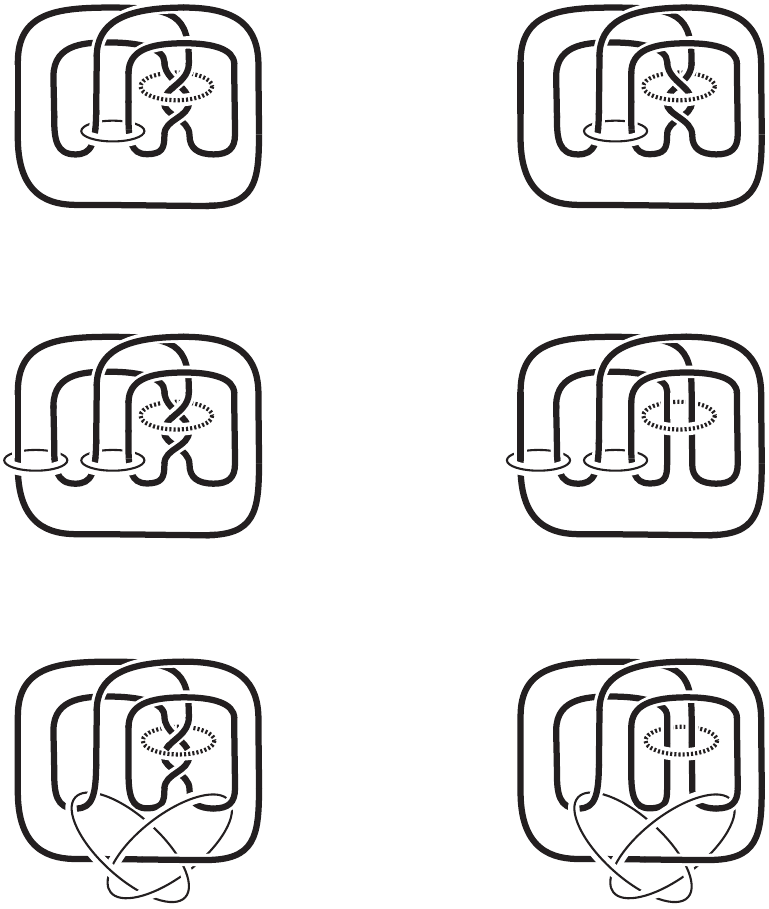}

\begin{picture}(400,0)(0,0)
\put(70,275){(1)}
\put(85,205){$GOF(0;n,1)$}
\put(102,230){\scriptsize $-n$}
\put(247,230){\scriptsize $-n$}
\put(230,205){$GOF(0;n,-1)$}
\put(178,240){$\longleftrightarrow$}
\put(175,250){order $2$}
\put(155,230){crossing change}
\put(70,185){(2)}
\put(85,110){$GOF(0;n,2)$}
\put(85,135){\scriptsize $2$}
\put(102,135){\scriptsize $-n$}
\put(230,110){$GOF(0;n,-2)$}
\put(230,135){\scriptsize $2$}
\put(248,135){\scriptsize $-n$}
\put(178,145){$\longleftrightarrow$}
\put(170,155){classical}
\put(155,135){crossing change}
\put(70,90){(3)}
\put(85,0){$GOF(1;n-2)$}
\put(75,18){\scriptsize $-(n+2)$}
\put(135,18){\scriptsize $0$}
\put(220,18){\scriptsize $-(n+2)$}
\put(280,18){\scriptsize $0$}
\put(230,0){$GOF(-1;n+2)$}
\put(178,50){$\longleftrightarrow$}
\put(170,60){classical}
\put(155,40){crossing change}
\end{picture}
\caption{Generalized crossing changes between GOF-knots.}
\label{fig:crossingchangeGOF}
\end{center}
\end{figure}
%%%%%%%%%%

As a result of identifying the manifolds $M(0; k, \ell)$ and $M(\pm1; m)$ in which $GOF(0; k, \ell)$ and $GOF(\pm1; m)$ sit (Theorem \ref{thm:Manifolds}), we have the following corollary.

\begin{restatable}{cor}{ManifoldsForGOFKnotsWithGeneralizedCrossingChanges} \leavevmode
\label{cor:ManifoldsForGOFKnotsWithGeneralizedCrossingChanges}
\begin{enumerate}
\item Every GOF-knot with a non-classical generalized crossing change resulting in another GOF-knot is in $L(n, 1)$ for some $n \in \mathbb{Z}$.
\item Every GOF-knot with a classical crossing change resulting in another GOF-knot is in $L(2, 1) \, \# \, L(n, 1)$ for some $n \in \mathbb{Z}$, $L(4,\pm1)$, or a prism manifold.\end{enumerate}
\end{restatable}
Here $L(0,1)$ and $L(\pm1,1)$ refer to $S^2\times S^1$ and $S^3$, respectively, and
$ \# $ denotes the connected sum of two manifolds.

%%%%%%%%%%
Non-classical generalized crossing changes (resp., classical crossing changes) between GOF-knots must occur at specific arcs in the genus one fiber surface, which are called clean and alternating arcs (resp., once-unclean and alternating arcs) (Lemma \ref{lem:AllCrossingChangesAlongArcs}).
When such arcs are present, we will be able to factorize monodromies in specific ways (Lemmas 
\ref{lemma:Alternating}, \ref{lemma:FactorizationClean} and \ref{lemma:FactorizationUnclean}) to prove Theorem \ref{thm:CrossingChanges}.
Further, we will identify when there are arcs that may give rise to inequivalent generalized crossing changes, and prove the following corollary.

\begin{restatable}{cor}{CorollaryGOFequivalence}\label{cor:GOFequivalence}
Any GOF-knot $K$ has at most two equivalence classes of generalized crossing changes which produce another GOF-knot.
Moreover, when $K$ has two generalized crossing changes, $K$ is equivalent to one of the following:
\begin{enumerate} 
\item \label{item:GOFS3} $GOF(0;1,-1)$ ($\cong GOF(0;-1,1)$ a figure-eight knot) in $S^3$.
\item \label{item:GOFL(2,1)} $GOF(0;\pm1,\pm2)$ ($\cong GOF(0;\pm2,\pm1)$) 
or \\$GOF(0;\pm1,\mp2)$ ($\cong GOF(0;\mp2,\pm1)$)  
in $L(2,1)$.
\item \label{item:GOFL(4,1)} $GOF(0;-4,-1)$ ($\cong GOF(-1;1)$) 
in $L(4,1)$.
\item[$(3)'$] $GOF(0;4,1)$ ($\cong GOF(1;-1)$) 
in $L(4,-1)$.
\item \label{item:GOFL(2,1)sharpL(2,1)} $GOF(0;\pm2,\pm2)$ ($\cong GOF(\pm1;\mp2)$) or \\$GOF(0;2,-2)$ ($\cong GOF(0;-2,2)$) in $L(2,1)\, \sharp\, L(2,1)$.
\end{enumerate}
\end{restatable}

\begin{rem}
For the homeomorphism indicated in item (\ref{item:GOFL(4,1)}), see Fig. \ref{fig:euivL(4,1)}, and for the first homeomorphism indicated in item (\ref{item:GOFL(2,1)sharpL(2,1)}), see Fig. \ref{fig:equivL(2,1)L(2,1)}, both in the \nameref{section:Appendix}. 
\end{rem}

\begin{example}
The $3$-manifolds $S^3$, $L(4,1)$, and $L(2,1)\ \sharp\ L(2,1)$ each have three distinct GOF-knots up to equivalence. 
The manifold $L(2,1)$ has four distinct GOF-knots (two mirror pairs) up to equivalence. 
In each case, all these GOF-knots are related by generalized crossing changes, see Figs. \ref{fig:crossingchangeS^3},  \ref{fig:crossingchangeL(4,1)}, \ref{fig:crossingchangeL(2,1)L(2,1)},  and \ref{fig:crossingchangeL(2,1)}.  
\end{example}
%
%%%%%%%%%%
\begin{figure}[h!]
\begin{center}
\includegraphics[scale=.6]{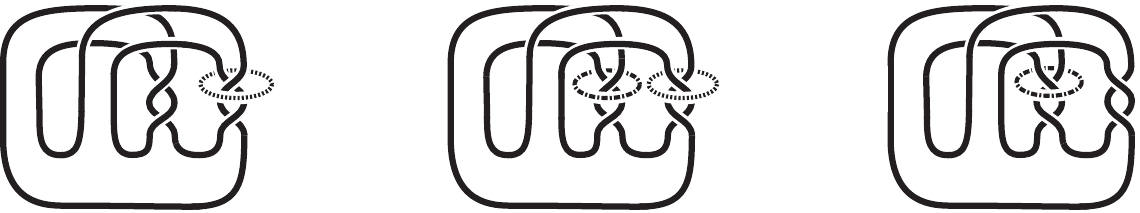}

\begin{picture}(400,0)(0,0)
\put(110,55){order $2$}
\put(240,55){order $2$}
\put(110,45){$\longleftrightarrow$}
\put(240,45){$\longleftrightarrow$}
\put(30,0){$GOF(0;1,1)$}
\put(25,80){(left-hand trefoil)}
\put(120,0){$GOF(0;1,-1)\cong GOF(0;-1,1)$}
\put(160,80){(figure-eight)}
\put(280,0){$GOF(0;-1,-1)$}
\put(280,80){(right-hand trefoil)}
\end{picture}
\caption{Generalized crossing changes between GOF-knots in $S^3$.}
\label{fig:crossingchangeS^3}
\end{center}
\end{figure}
%%%%%%%%%%
%%%%%%%%%%
\begin{figure}[h!]
\begin{center}
\includegraphics[scale=.6]{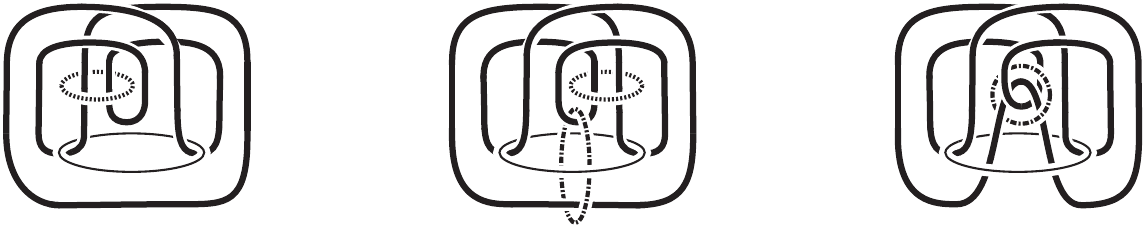}

\begin{picture}(400,0)(0,0)
\put(40,25){\scriptsize $4$}
\put(168,25){\scriptsize $4$}
\put(295,25){\scriptsize $4$}
\put(110,55){order $2$}
\put(230,55){classical}
\put(110,45){$\longleftrightarrow$}
\put(240,45){$\longleftrightarrow$}
\put(30,0){$GOF(0;-4,1)$}
\put(120,0){$GOF(0;-4,-1) \cong GOF(-1;1)$}
\put(290,0){$GOF(1;-3)$}
\end{picture}
\caption{Generalized crossing changes between GOF-knots in $L(4,1)$.}
\label{fig:crossingchangeL(4,1)}
\end{center}
\end{figure}
%%%%%%%%%%

\begin{rem}
By translating the monodromies listed here into elements of $SL_2(\mathbb{Z})$, it can be shown that the knots in Fig. \ref{fig:crossingchangeL(4,1)} (from left to right) correspond to $K_1$, $K_2$, and $K_3$ from \cite{MorGOFKLS}. A particularly interesting observation, then, is that there is a crossing change between the knots $K_2$ and $K_3$ in  \cite{MorGOFKLS}. 
\end{rem}

%%%%%%%%%%
\begin{figure}[h!]
\begin{center}
\includegraphics[scale=.6]{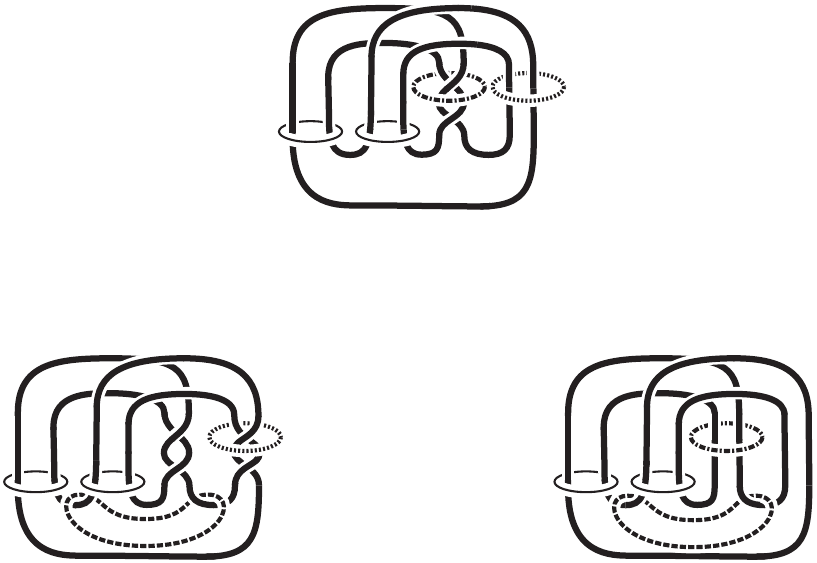}

\begin{picture}(400,0)(0,0)
\put(100,42){\scriptsize $2$}
\put(77,42){\scriptsize $2$}
\put(179,143){\scriptsize $2$}
\put(157,143){\scriptsize $2$}
\put(258,42){\scriptsize $2$}
\put(237,42){\scriptsize $2$}
\put(130,85){\rotatebox{45}{$\longleftrightarrow$}}
\put(230,85){\rotatebox{135}{$\longleftrightarrow$}}
\put(180,45){$\longleftrightarrow$}
\put(40,0){$GOF(0;2,2)\cong GOF(1;-2)$}
\put(120,105){$GOF(0;2,-2) \cong GOF(0;-2,2)$}
\put(200,0){$GOF(-1;2) \cong GOF(0;-2,-2)$}
\end{picture}
\caption{Classical crossing changes between GOF-knots in $L(2,1)\, \sharp\, L(2,1)$.}
\label{fig:crossingchangeL(2,1)L(2,1)}
\end{center}
\end{figure}
%%%%%%%%%%

%%%%%%%%%%
\begin{figure}[h!]
\begin{center}
\vspace{1cm}
\includegraphics[scale=.6]{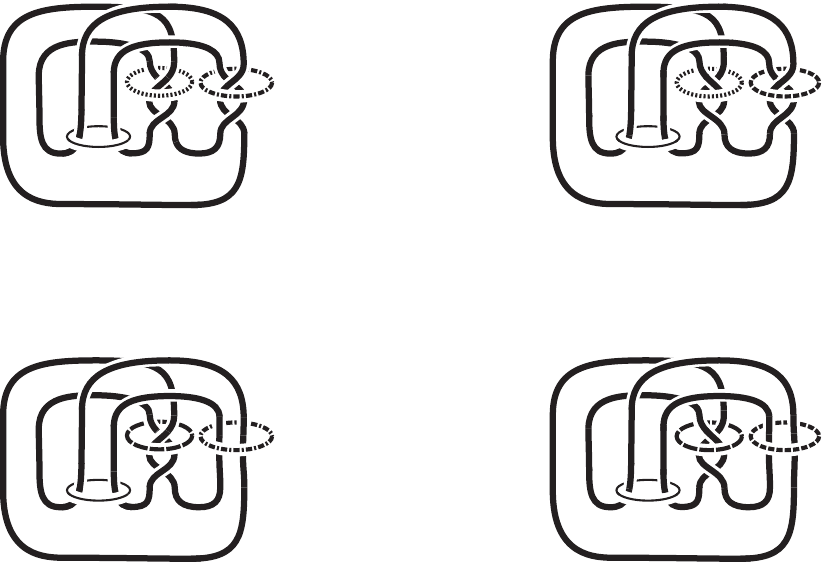}

\begin{picture}(400,0)(0,0)
\put(95,145){\scriptsize $2$}
\put(252,145){\scriptsize $2$}
\put(95,42){\scriptsize $2$}
\put(252,42){\scriptsize $2$}
\put(165,145){order $2$}
\put(165,55){order $2$}
\put(170,140){$\longleftrightarrow$}
\put(90,90){classical\ $\updownarrow$}
\put(250,90){classical\ $\updownarrow$}
\put(170,45){$\longleftrightarrow$}
\put(30,180){$GOF(0;1,2)\cong GOF(0;2,1)$}
\put(200,180){$GOF(0;2,-1)\cong GOF(0;-1,2)$}
\put(30,0){$GOF(0;1,-2)\cong GOF(0;-2,1)$}
\put(200,0){$GOF(0;-2,-1)\cong GOF(0;-1,-2)$}
\end{picture}
\caption{Generalized crossing changes between GOF-knots in $L(2,1)$.}
\label{fig:crossingchangeL(2,1)}
\end{center}
\end{figure}%%%%%%%%%%

We provide precise definitions of relevant terms in Section \ref{section:Definitions}. In Section \ref{section:CrossingChanges} we will discuss monodromies giving rise to clean or once-unclean arcs and 
we will identify the manifolds in which the relevant GOF-knots sit, and classify generalized crossing changes between GOF-knots, proving Corollary \ref{cor:ManifoldsForGOFKnotsWithGeneralizedCrossingChanges} and Theorem \ref{thm:CrossingChanges}. In Section \ref{section:ClassesCleanOnceUncleanArcs}, we prove Corollary \ref{cor:GOFequivalence}. In the Appendix, we provide additional surgery descriptions to show that certain manifolds are homeomorphic.

%\input{Introduction.tex}
%

%%%%%%%%%%%%%%%%%%%%%%%%%%%%%%%%%%%%

\section{Definitions and Background}
\label{section:Definitions}

%%%%%%%%%%%%%%%%%%%%%%%%%%%%%%%%%%%%%%%%

\subsection{Automorphisms}

Let $F$ be a compact, connected, oriented surface with boundary. Suppose $\alpha$ is an arc properly embedded in the surface $F$,  and $h$ is a homeomorphism $h : F \to F$ so that the restriction of $h$ to the boundary is the identity. As $h$ fixes the boundary pointwise, $\alpha$ and $h(\alpha)$ necessarily share their endpoints. For this reason, whenever we say that two arcs $\alpha$ and $\beta$ properly embedded in a surface $F$ are \emph{disjoint}, we shall mean that they are disjoint on their interiors. 

Thus, an arc $\alpha$ is said to be \emph{clean} (with respect to $h$) if $\alpha$ and $h(\alpha)$ are disjoint, (i.e., $int(\alpha) \cap int(h(\alpha)) = \emptyset$). We will also say that $\alpha$ is \emph{once-unclean} (with respect to $h$) if $|int(\alpha) \cap int(h(\alpha))| = 1$.

Assume that $\alpha$ and $h(\alpha)$ have been isotoped (fixing endpoints) to intersect minimally. In general, $\alpha \cup h(\alpha)$ will be a curve in $F$ with self-intersections. We may move the endpoints $\bd \alpha = \bd h(\alpha)$ slightly into the interior of $F$ to obtain a curve immersed in the interior of $F$. Choose an orientation on $\alpha$. There is an induced orientation on $h(\alpha)$ so that $\alpha \cup h(\alpha)$ has a coherent orientation that agrees with the orientation of $\alpha$. Then the intial point of $\alpha$ is the terminal point of $h(\alpha)$ and vice versa. Say that $\alpha$ is \emph{right-veering} if the orientations induced by the tangent vectors to $h(\alpha)$ then $\alpha$ are opposite the orientation on $F$ at both endpoints of the arcs. We say that $\alpha$ is \emph{left-veering} if these orientations agree with the orientation on $F$ at both endpoints of the arcs. In either case, we say that the arc $\alpha$ is \emph{alternating}, as $h(\alpha)$ approaches $\alpha$ on alternate sides at the endpoints. Otherwise, we say that $\alpha$ is \emph{non-alternating}. See Fig. \ref{figure:OrientationsAlternatingNonAlternating}.

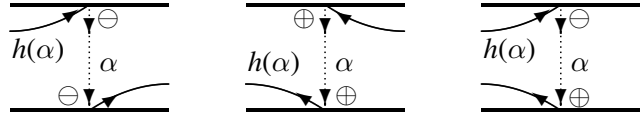
\begin{figure}[h]
\begin{center}
%%%%%
\begin{minipage}{.23\textwidth}
\begin{center}
\begin{picture}(80,40)(0,0)
\put(33,15){$\alpha$}
\linethickness{0.2mm}
\qbezier[20](30,0)(30,20)(30,40)
\put(0,20){$h(\alpha)$}
\qbezier(30,0)(45,10)(60,10)
\qbezier(30,40)(15,30)(0,30)
\linethickness{0.5mm}
\put(0,0){\line(1,0){60}}
\put(0,40){\line(1,0){60}}
\thicklines
\put(30,3){\vector(0,-1){1}}
\put(30, 30){\vector(0,-1){1}}
\put(40,4){\vector(4,2){1}}
\put(17, 3){$\ominus$}
\put(25,36){\vector(4,2){1}}
\put(32, 32){$\ominus$}
\end{picture}
\end{center}
\end{minipage}
%%%%%
%%%%%
\begin{minipage}{.23\textwidth}
\begin{center}
\begin{picture}(80,40)(0,0)
\put(33,15){$\alpha$}
\linethickness{0.2mm}
\qbezier[20](30,0)(30,20)(30,40)
\put(0,15){$h(\alpha)$}
\qbezier(30,0)(15,10)(0,10)
\qbezier(30,40)(45,30)(60,30)
\linethickness{0.5mm}
\put(0,0){\line(1,0){60}}
\put(0,40){\line(1,0){60}}
\thicklines
\put(20,6){\vector(-3,2){1}}
\put(35,37){\vector(-3,2){1}}
\put(30, 3){\vector(0,-1){1}}
\put(30, 30){\vector(0,-1){1}}
\put(33, 3){$\oplus$}
\put(17, 32){$\oplus$}
\end{picture}
\end{center}
\end{minipage}
%%%%%
\begin{minipage}{.23\textwidth}
\begin{center}
\begin{picture}(80,40)(0,0)
\put(33,15){$\alpha$}
\linethickness{0.2mm}
\qbezier[20](30,0)(30,20)(30,40)
\put(0,20){$h(\alpha)$}
\qbezier(30,0)(15,10)(0,10)
\qbezier(30,40)(15,30)(0,30)
\linethickness{0.5mm}
\put(0,0){\line(1,0){60}}
\put(0,40){\line(1,0){60}}
\thicklines
\put(30,3){\vector(0,-1){1}}
\put(30, 30){\vector(0,-1){1}}
\put(20,6){\vector(-3,2){1}}
\put(32, 2){$\oplus$}
\put(25,36){\vector(4,2){1}}
\put(32, 32){$\ominus$}
\end{picture}
\end{center}
\end{minipage}

\caption{The orientation induced by the tangent vectors to $h(\alpha)$ and then $\alpha$ either disagree with the orientation of $F$ at both endpoints (alternating, right-veering), agree with the orientation of $F$ at both endpoints (alternating, left-veering), or agree at one and disagree at the other endpoint (non-alternating).}
\label{figure:OrientationsAlternatingNonAlternating}
\end{center}
\end{figure}

Further, we will refer to a self-intersection point of $\alpha \cup h(\alpha)$, as a \emph{crossing}. We say that the crossing is \emph{positive} if the orientation induced by the tangent vectors to $h(\alpha)$ and then $\alpha$ agrees with the orientation on $F$, and \emph{negative} if this orientation disagrees with that of $F$. See Fig. \ref{figure:OreintationsCrossings}.

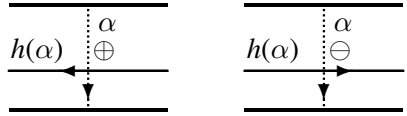
\begin{figure}[h]
\begin{center}
%%%%%
\begin{minipage}{.23\textwidth}
\begin{center}
\begin{picture}(80,40)(0,0)
\put(33,30){$\alpha$}
\linethickness{0.3mm}
\qbezier[20](30,0)(30,20)(30,40)
\put(0,20){$h(\alpha)$}
\qbezier(0,15)(30,15)(60,15)
\linethickness{0.5mm}
\put(0,0){\line(1,0){60}}
\put(0,40){\line(1,0){60}}
\thicklines
\put(20,15){\vector(-1,0){1}}
\put(30, 5){\vector(0,-1){1}}
\put(31, 20){$\oplus$}
\end{picture}
\end{center}
\end{minipage}
%%%%%
%%%%%
\begin{minipage}{.23\textwidth}
\begin{center}
\begin{picture}(80,40)(0,0)
\put(33,30){$\alpha$}
\linethickness{0.3mm}
\qbezier[20](30,0)(30,20)(30,40)
\put(0,20){$h(\alpha)$}
\qbezier(0,15)(30,15)(60,15)
\linethickness{0.5mm}
\put(0,0){\line(1,0){60}}
\put(0,40){\line(1,0){60}}
\thicklines
\put(40,15){\vector(1,0){1}}
\put(30, 5){\vector(0,-1){1}}
\put(31, 20){$\ominus$}
\end{picture}
\end{center}
\end{minipage}
%%%%%
\caption{A crossing is positive or negative depending on whether the orientation induced by tangent vectors to $h(\alpha)$ and then to $\alpha$ agree or disagree with the orientation on $F$, respectively.}
\label{figure:OreintationsCrossings}
\end{center}
\end{figure}

 \subsection{Surface bundles, open book decompositions, and monodromy maps}

Let $I$ be the unit interval $[0, 1]$. Given a homeomorphism $h: F \to F$ as above, we can form $(F \times I)/\sim$, where $(x, 1) \sim (h(x), 0)$ for all $x \in F$, the \emph{surface bundle over $S^1$}. The map $h$ is called the \emph{monodromy} of the bundle, and the bundle can also be denoted $(F \times I)/h$. Each copy of $F$ arising from $F \times \set{y}$ is called a \emph{fiber}. The resulting manifold is well-defined up to conjugation of $h$ in the mapping class group of $F$, and Dehn-twisting along curves in $F$ parallel to boundary components of $F$.

The surface bundle formed above has a toroidal boundary component arising from each boundary component of $F$. If we fill each toral boundary component with a solid torus so that each loop in the torus arising from $(\set{x} \times I) / h$ bounds a disk in the solid torus, where $x \in \bd F$, the result is a closed $3$-manifold, $M$. The union of the cores of all so-filled solid tori forms a link in this $3$-manifold. This link is often referred to as a \emph{fibered link} in $M$. In this language, each copy of the surface $F$ is again called a \emph{fiber}. Alternatively, the link is called the \emph{binding} of an \emph{open book decomposition} of $M$. In this language, each copy of the surface $F$ is called a \emph{page}. For the purposes of this paper, we will largely use the terms interchangably, often preferring the language of fibrations or surface bundles for ease of exposition.

Given a particular page $F_0$ in an open book decomposition, and an arc $\alpha$ properly embedded in $F_0$, let $n(\alpha)$ denote a regular neighborhood of $\alpha$ in $M$. Then there is a unique loop $L$ in $\bd n(\alpha) \rmv F_0$ that bounds a disk in the manifold intersecting the page $F_0$ in exactly the arc $\alpha$. We will call $L$ an \emph{$\alpha$-loop for the page $F_0$}.

%%%%%%%%%%%%%%%%%%%%%%%%%%%%%%%%%%%%%%%%
\subsection{The arc complex and isometric actions on $\mathbb{H}^2$}
\label{subsection:arccomplex}
The \emph{arc complex} $\AC(F)$ of a surface $F$ with boundary is a simplicial complex whose vertices correspond to the proper isotopy classes of (essential) arcs properly embedded in $F$, and whose vertices span a simplex if the vertices correspond to isotopy classes of arcs that can be made pairwise disjoint (on their interiors) in $F$. 

Suppose $F$ is a once-punctured torus. In this case, $\AC(F)$ is two-dimensional. In fact, by shrinking the boundary of $F$, isotopy classes of essential arcs in $F$ are in one-to-one correspondence with essential simple closed curves in the torus, which, in turn, are in one-to-one correspondence with $\mathbb{Q} \cup \{\infty\}$, the set of slopes on the torus. Further, two arcs in the punctured torus $F$ can be istoped to intersect minimally in $n$ points (in their interiors) if and only if their corresponding ratios $p/q$ and $p'/q'$ (in lowest terms, or $\infty = 1/0$) satisfy: $|pq' - qp'| = n+1$ (see, for instance, \cite{HatThuIS2BKC}).

It is well known that the $1$-skeleton of the arc complex of a once-punctured torus is the \emph{Farey graph}, and that the complex $\AC(F)$ has a very useful embedding into $\overline{\mathbb{H}^2}$, the Gromov compactification of the hyperbolic plane. Each $2$-dimensional simplex embeds as an ideal triangle, and each $1$-simplex embeds as a geodesic line. There is also an associated dual tree $\T$, which embeds in $\mathbb{H}^2$ by taking a vertex at the orthocenter of each triangle of $\AC(F)$, and joining two vertices arising from triangles in $\AC(F)$ sharing an edge. (See \cite{FloHatISPTB}.)

Suppose two essential arcs $\alpha$ and $\beta$ properly embedded in $F$ are disjoint and non-isotopic.   
They correspond to two vertices spanning a $1$-simplex of $\AC(F)$, and cutting  $F$ open along $\alpha\cup \beta$ results in a disk.
Then by Alexander's Trick, an automorphism $h$ of $F$ is determined up to free isotopy by the images of $\alpha$ and $\beta$ under $h$.

Further, an orientation-preserving homeomorphism of $F$, which can be identified with an element of $SL(2, \mathbb{Z})$, induces an automorphism of $\AC(F)$, an automorphism of $\T$, and an orientation-preserving isometry of $\mathbb{H}^2$ which extends to a continous map of $\overline{\mathbb{H}^2}$, agreeing with the actions on $\AC(F)$ and $\T$.

So, in particular, the monodromy map $h: F \to F$ induces an isometry $\widetilde{h}: \mathbb{H}^2 \to \mathbb{H}^2$. By a slight (and common) abuse of notation, we will refer to both the isometry on $\mathbb{H}^2$ and its extension to $\overline{\mathbb{H}^2}$ by $\widetilde{h}$. By the classification of hyperbolic isometries, $\widetilde{h}$ is one of three classes: (1) elliptic, (2) parabolic, or (3) loxidromic, which correspond exactly to $h$ being (1) periodic, (2) reducible, or (3) pseudo-Anosov  \cite{ThuOGDDS}.

Another observation clarifies an important distinction between automorphisms of the once-puctured torus and the induced automorphisms on the arc complex for the once-punctured torus. The vertices of the arc complex $\AC(F)$ are unoriented arcs. The oriented arc complex $\OAC(F)$ has vertices represented by oriented arcs in $F$. Then $\OAC(F)$ double-covers $\AC(F)$, and there is a short exact sequence relating the automorphism groups,
\[ \set{1} \to \mathbb{Z}_2 \to Aut(\OAC(F)) \stackrel{\pi}{\to} Aut(\AC(F)) \to \set{1} .\]

The hyper-elliptic involution, $\tau$, is a non-trivial automorphism, even up to free isotopy, but preserves all free isotopy classes of arcs set-wise, reversing their orientations. Thus, the induced action of $\tau$ on $\OAC(F)$ generates the kernel of $\pi$. This involution does not fix the boundary of $F$, so it is not a monodromy map.

%%%%%%%%%%%%%%%%%%%%%%%%%%%%%%%%%%%%

\section{Crossing Changes}
\label{section:CrossingChanges}

In this section, we will characterize all generalized crossing changes between two GOF-knots. We will first establish a correspondence between such generalized crossing changes and clean or once-unclean arcs in a fiber. Then we will classify all monodromies giving rise to such arcs. We will next describe the ambient spaces in which the fibered knots with these monodromies lie. Finally, we will classify all generalized crossing changes between GOF-knots.

\subsection{Clean and once-unclean arcs}
\label{subsection:CleanOnceUncleanArcs}
Baker, Johnson, and Klodginski classify once-punctured torus bundles that have tunnel number one, showing that they must be knot complements in lens spaces, $L(r, 1)$ \cite{BakJohKloTN1G1FK}. This is closely related to once-punctured torus bundles with a clean arc with respect to the monodromy. Coward and Lackenby (\cite{CowLacUGOK}) have shown that if a GOF-knot has a clean arc that is alternating, then there are at most two  distinct such arcs, up to monodromy equivalence. In \cite{BucIshRatShiBSCCBFL}, the authors with Buck and Shimokawa, investigate the related class of once-unclean arcs in fiber surfaces, and give a geometric characterization of when such arcs arise.

We recall that a \emph{crossing circle} for a knot (or link) $K$ is a circle $L$ that bounds a disk intersecting $K$ in two points with opposite orientations. We refer to the disk as a \emph{crossing disk}. Then, a \emph{generalized crossing change along $L$ of order $q$} is a $- \frac{1}{q}$ Dehn surgery on $L$, with $q \in \mathbb{Z} \rmv \set{0}$. Since $L$ bounds a disk, the ambient manifold does not change, but the knot may. When $q = \pm 1$, this is just an ordinary \emph{crossing change}. Also, $\chi(K)$ refers to the maximal Euler characteristic of all Seifert surfaces for $K$, and a Seifert surface $S$ for $K$ is said to be \emph{taut} if its Euler characteristic realizes $\chi(K)$.

\begin{lem}
\label{lem:AllCrossingChangesAlongArcs}
If $K$ is a GOF-knot with fiber $F$, $L$ is a crossing circle for $K$, and the result of an order $q$ generalized crossing change ($q$-twist) along $L$ is another GOF-knot, then $L$ bounds a disk that intersects $F$ in a single arc $\alpha$. Moreover, one of the following holds:
\begin{enumerate}
\item $q=\pm 2$, $\alpha$ is clean and alternating (not fixed) with respect to the monodromy of $F$, or 
\item $q=\pm 1$, $\alpha$ is once-unclean (and alternating) with respect to the monodromy of $F$. 
\end{enumerate}
\end{lem}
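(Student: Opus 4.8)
The plan is to split the proof into a topological normalization step (reducing to a single arc) and a numerical step (extracting the order $q$ from a framing/surgery computation), and to feed the outputs of Lemmas~\ref{lemma:Alternating}, \ref{lemma:FactorizationClean} and \ref{lemma:FactorizationUnclean} into the latter.

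First I would normalize the crossing disk. Since $L$ is a crossing circle, it bounds a disk $D$ meeting $K$ in two points of opposite sign, so the linking number of $L$ with $K$ is zero and $[L]=0$ in $H_1(M\rmv K)$. Under the fibration $M\rmv K\to S^1$ the class $[L]$ maps to $0$, so $L$ can be isotoped off the fiber $F$, into $F\times(0,1)$. After putting $D$ transverse to $F$, I would remove the closed components of $D\cap F$ by an innermost-disk argument, using that the fiber $F$ is incompressible and $M\rmv K$ is irreducible. As $\bd D=L$ is now disjoint from $F$, every remaining component of $D\cap F$ is an arc with both endpoints on $D\cap K$; since $|D\cap K|=2$, exactly one arc $\alpha$ survives, establishing the first sentence. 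A short argument shows $\alpha$ is essential: an inessential $\alpha$ makes the two strands of $K$ through $D$ parallel across a bigon in $F$, so the twist does not change the knot, contrary to obtaining a (distinct) GOF-knot.

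For the dichotomy, the hypothesis that the result is a GOF-knot means the $-\tfrac1q$ surgery on $L$ carries the once-punctured torus bundle $M\rmv K$ to another once-punctured torus bundle. Cutting along the fiber identifies the complement with $F\times I$, inside which $L$ lies; for the surgered manifold to again fiber with once-punctured torus fiber, the surgery on $L$ must realize an in-fiber twist in the sense of the Manifolds subsection. Matching the surgery slope against the twisting coefficient $\tfrac{n\ell+1}{n}$, where $\ell$ is the framing of $L$ (its linking with a fiber-parallel pushoff), gives $-\tfrac1q=\tfrac{n\ell+1}{n}$ for some nonzero integer $n$. Solving, $n=\tfrac{-q}{1+q\ell}$, and integrality forces $(1+q\ell)\mid 1$, i.e. $1+q\ell=\pm1$, which collapses to the single relation $q\ell=-2$. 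Hence $|\ell|\in\{1,2\}$ with $|q|=2/|\ell|$, and $|\ell|\ge 3$ is impossible.

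It then remains to identify $\ell$ with the arc data. By Lemma~\ref{lemma:Alternating}, if $\alpha$ were non-alternating then $h$ fixes $\alpha$ pointwise; the two strands through $D$ would then cobound a vertical annulus and the twist would produce a cable or connected sum rather than a GOF-knot, so $\alpha$ is alternating. Using Lemmas~\ref{lemma:FactorizationClean} and~\ref{lemma:FactorizationUnclean} together with the Farey description of $\AC(F)$ (arcs $p/q$ and $p'/q'$ meet in $m$ points exactly when $|pq'-qp'|=m+1$), I would show $|\ell|=|int(\alpha)\cap int(h(\alpha))|+1$: a clean arc contributes a factor $D_\delta^{\pm1}$, giving $|\ell|=1$, while a once-unclean arc contributes $D_\delta^{\pm2}$ or $(D_\bd)^{\pm1/2}$, giving $|\ell|=2$. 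Combined with $q\ell=-2$ this produces case (1) ($\alpha$ clean and alternating, $q=\pm2$) and case (2) ($\alpha$ once-unclean, $q=\pm1$), and excludes $m\ge 2$. The hard part will be exactly this last identification: justifying rigorously that ``the result is a GOF-knot'' forces the $\alpha$-loop surgery to be an in-fiber twist of framing $\ell=\pm(m+1)$, since the $\alpha$-loop is not literally an in-fiber curve, so the framing bookkeeping—including the signs, which come from the veering direction and the sign of the crossing of $\alpha\cup h(\alpha)$—must be tracked carefully, mirroring the surgery descriptions of the Manifolds subsection.
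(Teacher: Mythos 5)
Your first step has a genuine gap, and it is precisely the gap that the lemma's hypothesis is there to repair. From $[L]=0$ in $H_1(M\rmv K)$ you conclude that $L$ ``can be isotoped off the fiber $F$,'' but zero algebraic intersection with the fiber only yields a \emph{homotopy} of $L$ into the complement of a fiber, not an isotopy, and the conclusion is false in general. Concretely: let $U$ be the unknot with disk fiber $F$, and let $L$ be the crossing circle along which a crossing change turns $U$ into the trefoil. Then $[L]=0$ in $H_1(S^3\rmv U)$, yet $L$ cannot be isotoped off $F$, nor can it bound a crossing disk meeting $F$ in a single arc: if it could, then (running your own innermost-disk argument) the twist would carry $F$ to a Seifert surface for the new knot with $\chi = \chi(F) = 1$, so the result would again be the unknot, not the trefoil. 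The same clasp-like phenomenon can occur for a nullhomologous crossing circle of a GOF-knot. Notice that your step 1 never uses the hypothesis that the twisted knot is again a GOF-knot, so it would prove the ``single arc'' conclusion for \emph{every} crossing circle, which is false. The paper's proof uses that hypothesis essentially: it takes a surface $S$ taut for $K$ \emph{in the complement of $L$} (this surface meets the crossing disk in one arc by the standard local argument), applies Corollary 2.4 of \cite{GabFT3MII} to conclude that $S$ or its image $S'$ is taut, uses $\chi(K)=\chi(K')=-1$ (both knots are GOF-knots) to force $S$ to be taut for $K$, and then invokes the Floyd--Hatcher classification \cite{FloHatISPTB} to identify $S$ with the fiber.

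The second step also does not go through as written. The slope equation $-\frac{1}{q}=\frac{n\ell+1}{n}$ presupposes that the surgery on $L$ is equivalent to cutting along a fiber and regluing after twisting along a curve lying \emph{in} that fiber, with $\ell$ its fiber framing. But $L$ is an $\alpha$-loop: it is isotopic in $M\rmv K$ to the curve $\alpha\cup h(\alpha)$, which lies embedded on a fiber only when $\alpha$ is clean; in the once-unclean case (exactly case (2), the case you most need) $\alpha\cup h(\alpha)$ is immersed with one crossing and $L$ is not isotopic into a fiber at all, so the coefficient comparison has no meaning there. You flag this identification as ``the hard part,'' but it is not a residual detail --- it \emph{is} the dichotomy, and nothing in the proposal supplies it. (A smaller issue: your arithmetic also admits $1+q\ell=+1$, i.e.\ $\ell=0$ with $n=-q$, which you never exclude.) The paper takes a different route entirely: it quotes the correspondence already established in \cite{BucIshRatShiBSCCBFL} (Theorem 5 via Theorem 3), namely that an order-$q$ twist along a disk meeting the fiber in an arc yields a fibered link only if $q=\pm2$ with the arc clean and alternating, $q=\pm1$ with the arc once-unclean, or the arc is clean and non-alternating, and then eliminates the last case by Lemma \ref{lemma:Alternating}. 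A self-contained replacement would have to re-prove that correspondence, for instance via Proposition 1.4 of \cite{NiDSKPM} describing how the monodromy changes under surgery on $\mu\cup h(\mu)$, rather than by matching surgery coefficients.
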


\begin{proof}
Our method is similar to the proofs in $S^3$ from \cite{KalLinKAGET} and \cite{SchThoLGCM}, relying on an important result of Gabai in \cite{GabFT3MII}. Evidently, $\chi(K) = -1$. Suppose that $S$ is a taut surface bounded by $K$ in the complement of $L$. From the local picture, the crossing disk must intersect $S$ in a single arc. Let $K'$ and $S'$ be the images of $K$ and $S$, respectively, after the generalized crossing change, and note that $\chi(S') = \chi(S)$. By Corollary 2.4 of \cite{GabFT3MII}, at least one of $S$ or $S'$ is taut for $K$ or $K'$. But then they both realize $\chi(K) = \chi(K')$, so, in particular, $S$ must be taut for $K$. %From a classic result of Hatcher and Floyd \cite{FloHatISPTB} (stated for Anosov homeomorphisms, but true in general), t
There are no Euler characteristic $-1$ surfaces in a once-punctured torus bundle besides the fiber, so $S=F$, and the first part of the statement is established.

Now, in exactly the same way as obtaining Theorem 5 from Theorem 3 in \cite{BucIshRatShiBSCCBFL}, 
we have that if a crossing disk intersects a fiber surface in an arc, and the result of the generalized crossing change is another fiber bundle, then one of the two cases in the statement of the lemma %\ref{lem:AllCrossingChangesAlongArcs} 
occurs, or the arc is clean and non-alternating.  
However, Lemma \ref{lemma:Alternating} excludes the latter possibility.
\end{proof}

The converse of the first statement also holds -- for a given arc $\alpha$ properly embedded in a fiber surface, there is a uniquely determined crossing circle $L$ so that $L$ bounds a disk intersecting the fiber surface in $\alpha$.
We call such a crossing circle an \emph{$\alpha$-loop}, denoted by $L_{\alpha}$.
Lemma \ref{lem:AllCrossingChangesAlongArcs} implies that %any generalized crossing change on a GOF-knot resulting in another GOF-knot arises along an $\alpha$-loop for some clean or once-unclean arc $\alpha$. 
%In other words, 
non-classical generalized crossing changes (resp., classical crossing changes) between GOF-knots must occur at $\alpha$-loops for arcs $\alpha$ that are clean and alternating (resp., once-unclean and alternating). 
Hence, it suffices to look at clean or once-unclean (and alternating) arcs. 

\subsection{Monodromies}
\label{subsection:Monodromies}

\begin{lem}
\label{lemma:Alternating}
Suppose $h$ is an automorphism of a once-punctured torus that is the identity on the boundary. Let $\alpha$ be any essential arc in the surface. Then $\alpha$ is non-alternating if and only if $h|_\alpha = Id_\alpha$. 
\end{lem}

%In particular then, for any monodromy map, and any arc, if the monodromy moves the arc, then the arc is alternating. Observe that this is \emph{not} a statement about \emph{free} isotopy.

\begin{proof}

It is clear that an arc fixed pointwise by $h$ is non-alternating. 
Suppose, then, that $h(\alpha)$ is not isotopic fixing endpoints to $\alpha$, but has been isotoped fixing endpoints to intersect $\alpha$ minimally. Then cutting the surface along $\alpha$ cuts $h(\alpha)$ into disjoint arcs, $\eta_1, \dots, \eta_k$ properly embedded in an annulus with endpoints contained in the two sub-arcs of the boundary corresponding to $\alpha$, say $\alpha^{\pm}$. Observe that none of the $\eta_1, \dots, \eta_k$ are parallel into either of $\alpha^{\pm}$, because $\alpha$ and $h(\alpha)$ intersected minimally.

Every intersection between $\alpha$ and $h(\alpha)$ in the interior of these arcs corresponds to one endpoint of $\bigcup \eta_i$ on each of $\alpha^{\pm}$. Call an arc $\eta_i$ a $(++)$-arc, $(+-)$-arc or $(--)$-arc, depending on the locations of the endpoints of $\eta_i$. Suppose that $\alpha$ is non-alternating. Then, without loss of generality, the number of $(+)$-endpoints is exactly two greater than the number of $(-)$-endpoints. Every essential arc in the annulus will be a $(+-)$-arc, so there must be at least one $(++)$-arc. 

Consider, now, $\eta_1$ and $\eta_k$, the arcs incident to each of the endpoints of $\alpha^+$. (Of course, $\eta_k \neq \eta_1$, or else $h(\alpha) = \eta_1$ is an inessential arc in both the annulus and the punctured torus, while $\alpha$ was essential in the punctured torus.) If $\eta_1$ were a $(+-)$-arc, then it would be essential in the annulus, and there could be no $(++)$-arcs, for they would have to be parallel into $\alpha^+$, which is impossible. On the other hand, if $\eta_1$ were a $(++)$-arc, then it would be inessential in the annulus and would have one endpoint at an endpoint of $\alpha^+$ and the other endpoint in the interior of $\alpha^+$. Then $\eta_1$ would separate the annulus, and $\eta_k$ would be parallel into $\alpha^+$, which is still impossible.
\end{proof} 

Now, suppose $F$ is an oriented once-punctured torus.   
For any essential arc $\alpha$ properly embedded in $F$, %a once-punctured torus, 
there is a uniquely determined (up to isotopy) essential loop $c_\alpha$ in $F$ 
%in the once-punctured torus 
disjoint from $\alpha$. Let $D_\alpha$ denote the right-handed Dehn twist along the curve $c_\alpha$. 
%Observe that $D_\alpha$ is a reducible automorphism, fixing the arc $\alpha$ (and the loop $c_\alpha$). 
Take a pair of disjoint essential arcs $\alpha$ and $\beta$ in $F$ that together cut $F$ into a disk. 
Any orientation preserving self-homeomorphism of $F$ can be represented by a composition of $D_\alpha$ and $D_\beta$.
In particular, $(D_\alpha \circ D_\beta)^{3}$ represents a unique homeomorphism regardless of the choice of $\alpha$ and $\beta$. 
It is freely isotopic to the hyper-elliptic involution, 
We denote $(D_\alpha \circ D_\beta)^3$ by $(D_{\bd})^{1/2}$, since 
$\left((D_{\bd})^{1/2}\right)^2$ is isotopic to a single Dehn twist around a curve parallel to the boundary of $F$.
Any two \emph{monodromy} maps that are freely isotopic differ by some power of $(D_{\bd})^{1/2}$. 
Then we have the following two lemmas by using this representation.

%\LemmaFactorizationClean*

\begin{lem}
\label{lemma:FactorizationClean}
If $\kappa$ is a clean arc with respect to an orientation preserving self-homeomorphism $h$ of a once-punctured torus $F$, 
then $h = D_{\delta}^{\pm1} \circ D_{\kappa}^n$ for some arc $\delta$ in $F$ disjoint from $\kappa$ and some integer $n$.
%Let $h$ be a monodromy of a once-punctured torus bundle. 
%If $\kappa$ is a clean arc in $F$, then there exists an arc in the fiber, $\delta$, disjoint from $\kappa$ so that $h = D_{\delta}^{\pm1} \circ D_{\kappa}^n$ for some $n \in \mathbb{Z}$.
\end{lem}
\begin{proof}
Suppose $\kappa$ is an arc in a once-punctured torus that is clean with respect to $h$. Observe that the punctured torus cut along $\kappa$ is an annulus. Thus, any homeomorphism that actually fixes $\kappa$ must be $D_\kappa^n$ for some $n \in \mathbb{Z}$.

So, if $\kappa$ is fixed, then let $\delta = \kappa$, and the result holds. 

Otherwise, by Lemma \ref{lemma:Alternating}, $\kappa$ is an alternating arc. Since $\kappa$ is clean and alternating, $c = \kappa \cup h(\kappa)$ is a simple closed curve (we may take it to be in the interior of the surface by moving the two points $\bd \kappa = \bd h(\kappa)$ slightly into the interior). There is a unique arc, $\delta$, in the surface distinct from $\kappa$ and disjoint from $c$, and $c = c_\delta$. Now $(D_{\delta})^{\mp1}%\pm 1
 \circ h$ fixes $\kappa$, so $(D_{\delta})^{\mp1}%\pm 1
  \circ h$ is equal to $D_{\kappa}^n$ for some $n \in \mathbb{Z}$.   \end{proof}

When an orientation preserving automorphism, $h$, has a once-unclean arc, $\mu$, recall that $\mu \cup h(\mu)$ can be considered an immersed curve with a single crossing, and that orienting $\mu$ (either way) gives a well-defined sign to the crossing because of an induced orientation on $h(\mu)$. There are then two ways of resolving this crossing. The resolution that is consistent with the orientations results in two simple closed curves. Call these curves $r$ and $s$. The other resolution results in a single simple closed curve. Call this curve $t$.

\begin{lem}\label{lemma:FactorizationUnclean}%
If $\mu$ is a once-unclean arc with respect to  an orientation preserving self-homeomorphism $h$ of a once-punctured torus $F$,  then one of the following holds:
\begin{enumerate} 
\item \label{case:RVNC} The arc $\mu$ is either right-veering with a negative crossing, or is left-veering with a positive crossing, $t$ is trivial, $r = s$ are both essential loops isotopic 
to $c_\delta$ for some essential arc $\delta$ disjoint from $\mu$, and $h = D_{\delta}^{\pm2
} \circ D_{\mu}^n$ for some $n \in \mathbb{Z}$.

\item \label{case:RVPC} The arc $\mu$ is either right-veering with a positive crossing, or is left-veering with a negative crossing, $t$ is parallel to the boundary of $F$, $r = s$ are both essential loops disjoint from $\mu$ and $h(\mu)$, and $h = (D_\bd)^{\pm1/2} \circ D_\mu^n$ for some $n \in \mathbb{Z}$.
\end{enumerate}
\end{lem}

\begin{proof}

First suppose that $\mu$ is right-veering with a negative crossing or left-veering with a positive crossing. See Fig. \ref{figure:RVNegCrossingLVPosCrossing}.

\begin{figure}[h]
\begin{center}
%%%%%
\begin{minipage}{.23\textwidth}
\begin{center}
\begin{picture}(80,40)(0,0)
\put(33,25){$\mu$}
\linethickness{0.3mm}
\qbezier[20](30,0)(30,20)(30,40)
\put(0,20){$h(\mu)$}
\qbezier(30,0)(45,10)(60,10)
\qbezier(30,40)(15,30)(0,30)
\qbezier(0,15)(30,15)(60,15)
\linethickness{0.5mm}
\put(0,0){\line(1,0){60}}
\put(0,40){\line(1,0){60}}
\thicklines
\put(50,15){\vector(1,0){1}}
\put(55,10){\vector(4,1){1}}
\put(10,31){\vector(4,1){1}}
\end{picture}
\end{center}
\end{minipage}
%%%%%
%%%%%
\begin{minipage}{.23\textwidth}
\begin{center}
\begin{picture}(80,40)(0,0)
\put(33,25){$\mu$}
\linethickness{0.3mm}
\qbezier[20](30,0)(30,20)(30,40)
\put(0,20){$h(\mu)$}
\qbezier(30,0)(15,10)(0,10)
\qbezier(30,40)(45,30)(60,30)
\qbezier(0,15)(30,15)(60,15)
\linethickness{0.5mm}
\put(0,0){\line(1,0){60}}
\put(0,40){\line(1,0){60}}
\thicklines
\put(10,15){\vector(-1,0){1}}
\put(5,10){\vector(-4,1){1}}
\put(50,31){\vector(-4,1){1}}
\end{picture}
\end{center}
\end{minipage}
%%%%%
\caption{Right-veering with a negative crossing or left-veering with a positive crossing.}
\label{figure:RVNegCrossingLVPosCrossing}
\end{center}
\end{figure}
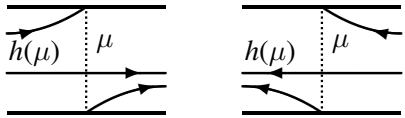

Since both $r$ and $s$ intersect $\mu$ exactly once, they must be essential, but $r$ and $s$ are also disjoint, so they must be isotopic. So, $r$ and $s$ are equal to $c_\delta$ for some essential arc disjoint from $\mu$. Now, as $r$ and $s$ are isotopic, they cobound an annulus in $F$, and the curve $t$ is the boundary of the disk obtained by cutting this annulus along an essential arc, so $t$ is trivial. Finally, observe that $h(\mu)$ is obtained from $\mu$ precisely by twisting twice positively (respectively, negatively) around $r = s = c_\delta$ when $\mu$ is right-veering (respectively, left-veering). So $D_{\delta}^{\mp2} \circ h$ is equal to $D_\mu^n$ for some $n \in \mathbb{Z}$.

Next, suppose that $\mu$ is right-veering with a positive crossing or left-veering with a negative crossing. See Fig. \ref{figure:RVPosCrossingLVNegCrossing}. 

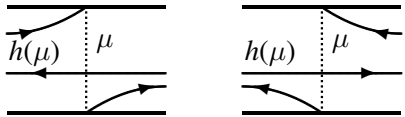
\begin{figure}[h]
\begin{center}
%%%%%
\begin{minipage}{.23\textwidth}
\begin{center}
\begin{picture}(80,40)(0,0)
\put(33,25){$\mu$}
\linethickness{0.3mm}
\qbezier[20](30,0)(30,20)(30,40)
\put(0,20){$h(\mu)$}
\qbezier(30,0)(45,10)(60,10)
\qbezier(30,40)(15,30)(0,30)
\qbezier(0,15)(30,15)(60,15)
\linethickness{0.5mm}
\put(0,0){\line(1,0){60}}
\put(0,40){\line(1,0){60}}
\thicklines
\put(10,15){\vector(-1,0){1}}
\put(55,10){\vector(4,1){1}}
\put(10,31){\vector(4,1){1}}
\end{picture}
\end{center}
\end{minipage}
%%%%%
%%%%%
\begin{minipage}{.23\textwidth}
\begin{center}
\begin{picture}(80,40)(0,0)
\put(33,25){$\mu$}
\linethickness{0.3mm}
\qbezier[20](30,0)(30,20)(30,40)
\put(0,20){$h(\mu)$}
\qbezier(30,0)(15,10)(0,10)
\qbezier(30,40)(45,30)(60,30)
\qbezier(0,15)(30,15)(60,15)
\linethickness{0.5mm}
\put(0,0){\line(1,0){60}}
\put(0,40){\line(1,0){60}}
\thicklines
\put(50,15){\vector(1,0){1}}
\put(5,10){\vector(-4,1){1}}
\put(50,31){\vector(-4,1){1}}
\end{picture}
\end{center}
\end{minipage}
%%%%%
\caption{Right-veering with a positive crossing or left-veering with a negative crossing.}
\label{figure:RVPosCrossingLVNegCrossing}
\end{center}
\end{figure}

It is clear from the Fig. \ref{figure:RVPosCrossingLVNegCrossing} that $r$ and $s$ are both disjoint from $\mu$ and $h(\mu)$, and neither is boundary parallel. If either of them were trivial, then $h(\mu)$ could be isotoped fixing endpoints across the disk bounded by the curve to intersect $\mu$ fewer times, and then $\mu$ would be clean instead of once-unclean. As there cannot be disjoint essential curves in a once-punctured torus, $r$ is isotopic to $s$.

Since $r$ and $s$ are non-trivial, the geometric intersection number between $t$ and $\mu$ is exactly two, so $t$ cannot be trivial. But, on a once-punctured torus, the geometric intersection number of any essential simple closed curve with an essential arc is the absolute value of the algebraic intersection number, while the algebraic intersection number between $t$ and $\mu$ is zero, so $t$ cannot be an essential curve. Thus, $t$ must be nontrivial but inessential, so it is boundary parallel. 
 
Finally, since $t$ is boundary parallel, but traces the path of $h(\mu)$, observe that the half-twist around the boundary, $(D_\bd)^{-1/2}$ (respectively, $(D_\bd)^{1/2}$), carries $h(\mu)$ to $\mu$ when $\mu$ is right-veering with a positive crossing (respectively, left-veering with a negative crossing). Hence $(D_\bd)^{\mp1/2} \circ h$ fixes $\mu$, so is equal to $D_{\mu}^n$ for some $n \in \mathbb{Z}$.
\end{proof}

The following lemma describes how the monodromy is changed by a generalized crossing change between GOF-knots. Recall that the monodromy of a fibered knot is defined up to conjugation.

\begin{lem}\label{lem:MonodromyChange} Assume the hypotheses of Lemma \ref{lem:AllCrossingChangesAlongArcs}.
\begin{enumerate}
\item If $\kappa$ is a clean and alternating arc of intersection between the crossing disk and the fiber, then an order $\pm2$ generalized crossing change along the $\kappa$-loop changes the monodromy by composition with $D_{\delta}^{\pm2}$.
$$D_\delta \circ D_\kappa^n\quad\autorightleftharpoons{$D_{\delta}^{-2}$}{$D_{\delta}^2$}\quad D_\delta^{-1} \circ D_\kappa^n.$$
\item[{\rm(2-1)}] If $\mu$ is a once-unclean arc of intersection between the crossing disk and the fiber, and there is an essential arc $\delta$ disjoint from $\mu$, then an order $\pm1$ classical crossing change along the $\mu$-loop 
changes the monodromy by composition with $D_{\delta}^{\pm4}$.
$$D_\delta^2 \circ D_\mu^n\quad\autorightleftharpoons{$D_{\delta}^{-4}$}{$D_{\delta}^4$}\quad D_\delta^{-2} \circ D_\mu^n.$$
\item[{\rm(2-2)}] If $\mu$ is a once-unclean arc of intersection between the crossing disk and the fiber, and there is no essential arc disjoint from $\mu$, then an order $\pm1$ classical crossing change along the $\mu$-loop 
changes the monodromy by composition with $D_{\mu}^{\pm4}\circ (D_\bd)^{\mp1}$.
$$(D_\bd)^{1/2} \circ D_\mu^n\quad\autorightleftharpoons{$D_{\mu}^{4}\circ (D_\bd)^{-1}$}{$D_{\mu}^{-4}\circ (D_\bd)$}\quad (D_\bd)^{-1/2} \circ D_\mu^{n+4}.$$
\end{enumerate}
\end{lem}
\begin{proof}
\begin{enumerate} 
\item If $\kappa$ is a clean and alternating arc, then by Lemma \ref{lemma:FactorizationClean} the monodromy $h$ has the form $D_\delta^{\pm1} \circ D_\kappa^n$. 
The $\kappa$-loop $L_\kappa$ is isotopic to $c_\delta$ in the surface bundle $(F\times I)/h$. 
Then the conclusion (1) holds by Lemma \ref{lem:AllCrossingChangesAlongArcs} and Proposition 1.4 of \cite{NiDSKPM}.
\item If $\mu$ is a once-unclean arc, then the conclusions of (2-1) or (2-2) follow from Lemma \ref{lemma:FactorizationUnclean} and  Corollary 5 of \cite{BucIshRatShiBSCCBFL} which is based on Proposition 1.4 of \cite{NiDSKPM}. \qedhere
\end{enumerate}
\end{proof}

Recall that an orientation preserving automorphism $h : F\to F$ induces an isometry $\widetilde{h}:\mathbb{H}^2\to\mathbb{H}^2$, that can be identified with an element of $SL(2,\mathbb{Z})$.
\begin{lem}\label{lem:matrix}
Let $\delta$, $\kappa$, and $\mu$ be essential arcs properly embedded in $F$. Suppose $\delta$ and $\kappa$ (resp., $\delta$ and $\mu$) are disjoint and not isotopic.
Then automorphisms 
\begin{center}
$D_{\delta}^{\pm1} \circ\,  D_{\kappa}^{n}$,  
$D_{\delta}^{\pm2} \circ\,  D_{\mu}^{n}$, and $(D_{\bd})^{\pm1/2}\circ\, D_{\kappa}^{n}$ 
\end{center}
can be represented by 
\begin{center}
$\begin{pmatrix} 1 & n \\ 
\mp1 & 1 \mp n 
\end{pmatrix}$, 
$\begin{pmatrix} 1 & n \\ 
\mp2 & 1 \mp 2n 
\end{pmatrix}$,
and 
$\begin{pmatrix} -1 & n \\ 
0 & -1 
\end{pmatrix}$,
respectively.
\end{center}
\end{lem}
\begin{proof}
Let $\alpha$ and $\beta$ be disjoint and non-isotopic essential arcs in $F$. 
Since the first homology of $F$ generated by  $[c_{\alpha}]$ and $[c_{\beta}]$, we put $[c_{\alpha}]=^t\!\!(1,0)$ and $[c_{\beta}]=^t\!\!(0,1)$.
Then the automorphisms $D_{\alpha}$ and $D_{\beta}$ are represented by  
either
$\begin{pmatrix} 
1 & 1 \\ 
0 & 1 
\end{pmatrix}$ and 
$\begin{pmatrix} 
1 & 0 \\ 
-1 & 1 
\end{pmatrix}$, or
$\begin{pmatrix} 
1 & -1 \\ 
0 & 1 
\end{pmatrix}$ 
and 
$\begin{pmatrix} 
1 & 0 \\ 
1 & 1 
\end{pmatrix}$,
according to the choice of orientations of 
$c_{\alpha}$ and $c_{\beta}$. 
Applying this for the arcs $\delta$ and $\kappa$ (resp.,  $\delta$ and $\mu$) we have the conclusion.
\end{proof}

Using Lemmas \ref{lemma:FactorizationClean}, \ref{lemma:FactorizationUnclean}, and \ref{lem:matrix}, together with the classification of orientation-preserving automorphisms of the once-punctured torus by the trace of the induced element of $SL(2, \mathbb{Z})$, we immediately have the following corollary.

\begin{cor}
\label{cor:ClassificationMonodromies}
Every orientation preserving automorphism of the once-punctured torus that fixes the boundary pointwise and admits an arc that is either clean or once-unclean appears in Table \ref{table:ClassificationMonodromies}, up to inverses and conjugation.
\end{cor}

\begin{table}[h!]
  \begin{center}
    \begin{tabular}{c|c}
      \textbf{Classification} & \textbf{Monodromy} \\
      \hline
      Periodic
      & $D_{\delta} \circ D_{\kappa}$\qquad% \\ % <-- Combining 2 rows with arbitrary with (*) and content 12 
      $D_{\delta} \circ D_{\kappa}^2$ \qquad
      $D_{\delta} \circ D_{\kappa}^3$ \qquad
      $D_{\delta}^2 \circ D_{\mu}$ \\% <-- Content of first column omitted.
      \hline
      \multirow{2}{*}{Reducible} & 
      \quad$D_{\kappa}^n$ \qquad
      $D_{\delta} \circ D_{\kappa}^0$\qquad 
      $D_{\delta} \circ D_{\kappa}^4$\qquad
      $D_{\delta}^2 \circ D_{\mu}^0$\qquad \\% <-- Content of first column omitted.
      & \quad  $D_{\delta}^2 \circ D_{\mu}^2$\qquad  \quad    
      $(D_{\bd})^{1/2} \circ D_{\mu}^{n}$ \\
      \hline
      \multirow{2}{*}{Pseudo-Anosov} & 
      $D_{\delta}^{-1} \circ D_{\kappa}^n$ ($n > 0$)\qquad   
     $D_{\delta} \circ D_{\kappa}^n$ ($n > 4$)\\%\\ % <-- Combining 2 rows with arbitrary with (*) and content 12
    &  $D_{\delta}^{-2} \circ D_{\mu}^n$ ($n > 0$) \qquad
      $D_{\delta}^2 \circ D_{\mu}^n$ $(n > 2)$ \\% <-- Content of first column omitted.
      \hline
    \end{tabular}
        \caption{Monodromies of the once-punctured torus bundle that admit clean arcs ($\kappa$) or once-unclean arcs ($\mu$), up to inverses and conjugation.}
    \label{table:ClassificationMonodromies}
  \end{center}
\end{table}

\begin{rem}
\label{rem:Redundancy}
We note that there is redundancy in Table \ref{table:ClassificationMonodromies}. For instance, when $n=2$ in $D_\delta^{-1} \circ D_\kappa^n$, we may instead think of $\delta$ as a once-unclean arc $\mu$, relabel $\kappa$ as $\delta'$, and recognize this as the monodromy $D_\mu^{-1} \circ D_{\delta'}^2$, which is the inverse of $D_{\delta'}^{-2} \circ D_\mu^k$ for $k = 1$. This points to the facts that the monodromies can be factored in multiple ways, and that a single monodromy might have multiple $h$-equivalence classes of clean and/or once-unclean arcs. This final subtlety will be addressed in Section \ref{section:ClassesCleanOnceUncleanArcs}.
\end{rem}

\subsection{Manifolds}
\label{subsection:Manifolds}

We can use Lemmas \ref{lemma:FactorizationClean} and \ref{lemma:FactorizationUnclean} to give a link-surgery description of every GOF-knot with a clean arc or once-unclean arc, and describe the manifolds in which the GOF-knots sit.

\begin{thm} \leavevmode
\label{thm:Manifolds}

\begin{enumerate}
\item Every once-punctured torus bundle with a clean arc is the complement of a GOF-knot in $L(n, 1)$ for some $n \in\mathbb{Z}$. 
\item Every once-punctured torus bundle with a once-unclean arc is the complement of a GOF-knot in a prism manifold, $L(4,\pm1)$, or $L(2, 1)\, \sharp\, L(n,1)$ for some $n\in \mathbb{Z}$.
\end{enumerate}
\end{thm}

\begin{proof}

Let $GOF(0;k,\ell)$ and $GOF(\pm1;m)$ be GOF-knots with monodromy $D_\beta^\ell\circ D_\alpha^k$ and $(D_\bd)^{1/2} \circ D_\alpha^m$, respectively, and let $M(0;k,\ell)$ and $M(\pm1;m)$, respectively, be the manifolds in which they sit. 

We will describe the resulting knot complement as the result of a particular Dehn surgery on the trefoil knot in $S^3$. 
With disjoint arcs $\alpha$ and $\beta$ in a once-punctured torus $F$, the monodromy of the trefoil knot is represented by $D_\alpha\circ D_\beta$. 
Namely, the exterior of the trefoil knot is homeomorphic to the manifold which is obtained from $F\times [0,1]$ by identifying two points $(x,1)$ and $(h(x),0)$, and the meridian corresponds to $\set{y} \times [0,1]$ for a point $y$ in $\bd F$, see Fig. \ref{fig:TrefoilFiber}. 
For a loop $c$ in $F\times \{*\}$, we consider $\left(\frac{n\ell_c+1}{n}\right)$-surgery along $c$, where $\ell_c$ is the linking number of $c$ with a loop parallel to $c$ in $F\times \{*\}$.
This surgery corresponds to the operation of cutting the fiber bundle along $F\times \{*\}$ and gluing it again after twisting $n$-times along $c$. 
Then the resulting manifold is a new once-punctured torus bundle, whose monodromy is changed by $D_c^n$ from the original one, where $D_c$ is a Dehn twist along $c$.
We will use this method multiple times to give surgery descriptions of $GOF(0;k,\ell)$ in $M(0;k,\ell)$ and $GOF(\pm1;m)$ in $M(\pm1;m)$. 
Note that $\ell_c=1$ if $c=c_\alpha\times\{*\}$ or $c=c_\beta\times\{*\}$.
 
In the case of $GOF(0;k,\ell)$ in $M(0;k,\ell)$, which has a monodromy $h=D_\beta^\ell\circ D_\alpha^k$, we use two loops $c_1=c_\alpha\times\left\{\frac{1}{3}\right\}$ and $c_2=c_\beta\times\left\{\frac{2}{3}\right\}$ to provide a surgery description.
The surgery coefficient are $\frac{k}{k-1},\frac{\ell}{\ell-1}$ for $c_1,c_2$ respectively.
Then the resulting manifold is a once-punctured torus bundle with the monodromy $D_\beta^{\ell-1}\circ D_\alpha^{k}\circ D_\beta$, which is conjugate to $h$. 
Let $L$ be the $\alpha$-loop for the fiber $F \times \set{0}$.
By the Kirby calculus, we have a surgery description of $GOF(0;k,\ell)$, together with the $\alpha$-loop, $L$, see Fig. \ref{fig:surgerydescriptionGOF(0;k,l)}.
The manifold $M(0;k,\ell)$ is homeomorphic to $L(-\ell,1)\, \sharp\, L(-k,1)$. 
In particular, 
\begin{center}
$M(0;k,\ell)=\begin{cases}
L(-k,1)&(\ell=\pm1)\\
L(2,1)\, \sharp\, L(-k,1)&(\ell=\pm2).\\
\end{cases}$
\end{center}

In the case of $GOF(1;m)$ in $M(1;m)$, which has a monodromy $h=(D_\bd)^{1/2} \circ D_\alpha^m$, we use three loops $c_1=c_\alpha\times\left\{\frac{1}{4}\right\},c_2=c_\beta\times\left\{\frac{1}{2}\right\},c_3=c_\alpha\times\left\{\frac{3}{4}\right\}$ for a surgery description.
The surgery coefficients are $2,2,\frac{m+3}{m+2}$ for $c_1,c_2,c_3$ respectively.
Then the resulting manifold is a once-punctured torus bundle with the monodromy $D_\alpha^{m+2}\circ D_\beta\circ D_\alpha^2\circ D_\beta$, which is conjugate to $h=(D_\alpha\circ D_\beta)^3\circ D_\alpha^m$. 
Let $L$ be the $\alpha$-loop for the fiber $F \times \set{ \frac{3}{4} }$.
By the Kirby calculus, we have a surgery description of $GOF(1;m)$, together with the $\alpha$-loop, $L$, see Fig. \ref{fig:surgerydescriptionGOF(1;m)}. 
As the exterior of the $(2, 4)$-torus link is a Seifert fibered space over the annulus with one exceptional fiber of multiplicity $2$, and the regular fibers intersect the $0$- and $(-m)$-slopes 2 and $m+2$ times, respectively, the result of Dehn filling is the Seifert fibered space over the sphere with three exceptional fibers,
having Seifert invariants $\left(-1; (2, 1), (2, 1), (m+2, 1)\right)$. (See also Fig. \ref{fig:prism} the \nameref{section:Appendix}.) In particular then (see \cite{BalChiMacNiOchVafPMRP}), 
\begin{center}
$M(1;m) = \begin{cases}
L(4,-1)&(m=-1)\\
L(2,1)\, \sharp\, L(2,1)&(m=-2)\\
L(4,1)&(m=-3)\\
\mbox{a prism manifold}&(\mbox{otherwise}).
\end{cases}$
\end{center}

In the case of $GOF(-1;m)$ in $M(-1;m)$, which has a monodromy $h=(D_\bd)^{-1/2} \circ D_\alpha^m$, we have a surgery description by taking the mirror image of the case of $GOF(1;-m)$ in $M(1;-m)$. 
The manifold $M(-1;m)$ is homeomorphic 
to the Seifert fibered space having Seifert invariants $(1; (2, -1), (2, -1), (m-2, 1))$.
\end{proof}

%%%%%%%%%%
\vspace{5mm}
\begin{figure}[h!]
\begin{center}
\includegraphics[scale=1]{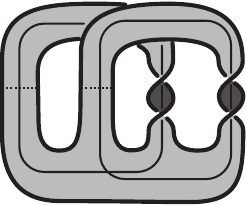}

\begin{picture}(400,0)(0,0)
\put(120,70){$\alpha$}
\put(157,70){$\beta$}
\put(215,20){$c_\alpha$}
\put(155,23){$c_\beta$}
\end{picture}
\caption{The trefoil in $S^3$ has monodromy $D_\alpha \circ D_\beta$.}
\label{fig:TrefoilFiber}
\end{center}
\end{figure}
%%%%%%%%%%

%%%%%%%%%%
\begin{figure}[h!]
\begin{center}
\includegraphics[scale=.6]{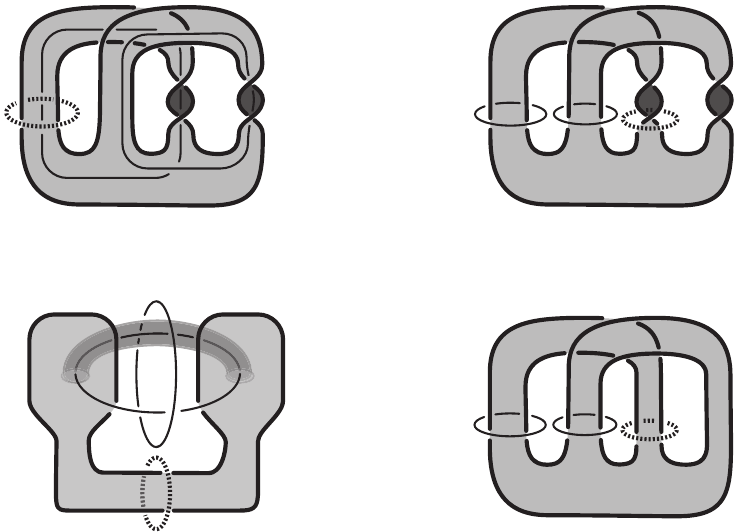}

\begin{picture}(400,0)(0,0)
\put(70,135){\scriptsize $L$}
\put(140,170){\scriptsize $c_1(\frac{k}{k-1})$}
\put(70,105){\scriptsize $c_2(\frac{\ell}{\ell-1})$}
\put(242,125){\scriptsize $\frac{k}{k-1}$}
\put(222,125){\scriptsize $\frac{\ell}{\ell-1}$}
\put(243,35){\scriptsize $-k$}
\put(221,35){\scriptsize $-\ell$}
\put(95,50){\scriptsize $-k$}
\put(120,85){\scriptsize $-\ell$}
\put(180,90){\scriptsize $(-1)$-twist along $c_1$ and $c_2$  $\downarrow $}
\put(185,135){$\sim$}
\put(185,45){$\sim$}
\end{picture}
\caption{$GOF(0;k,\ell)$ with monodromy $D_\beta^\ell\circ D_\alpha^k $ and an $\alpha$-loop in $M(0;k,\ell)$.}
\label{fig:surgerydescriptionGOF(0;k,l)}
\end{center}
\end{figure}
%%%%%%%%%%

\begin{figure}[h!]
\begin{center}
\includegraphics[scale=.6]{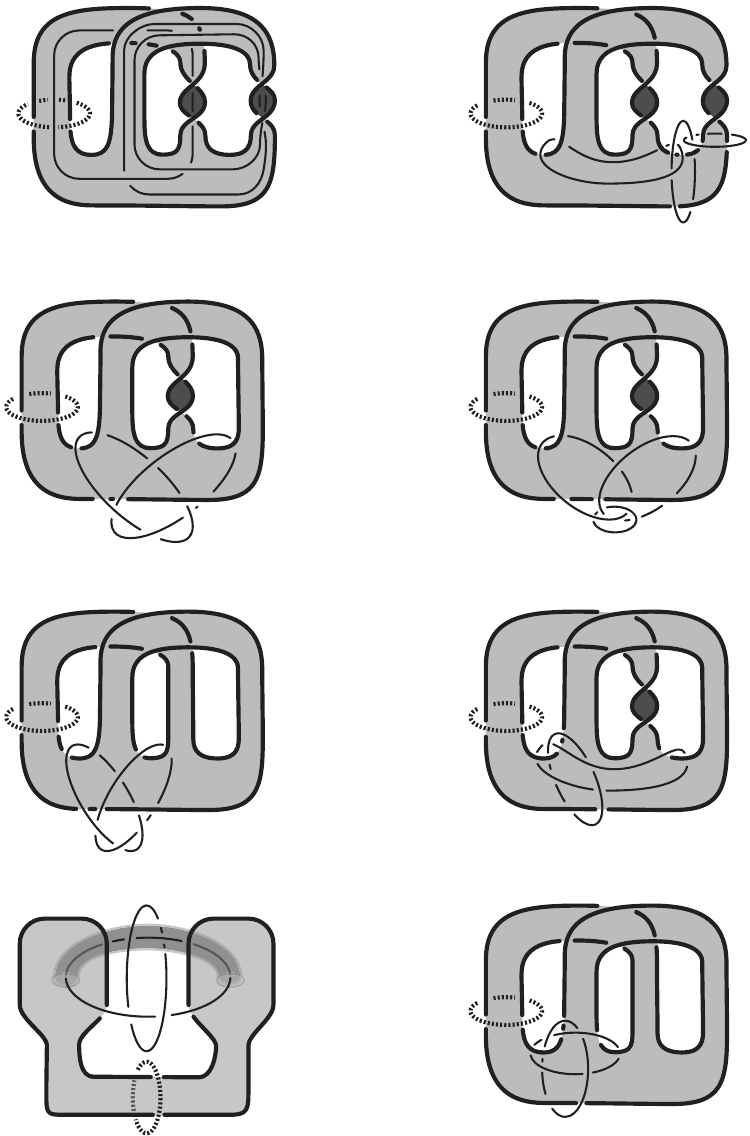}

\begin{picture}(400,0)(0,0)
\put(75,305){\scriptsize $L$}
\put(160,310){\scriptsize $c_1(2)$}
\put(97,285){\scriptsize $c_2(2)$}
\put(150,280){\scriptsize $c_3(\frac{m+3}{m+2})$}
\put(295,300){\scriptsize $2$}
\put(240,285){\scriptsize $2$}
\put(270,270){\scriptsize $\frac{m+3}{m+2}$}
\put(255,180){\scriptsize $1$}
\put(230,203){\scriptsize $1$}
\put(268,190){\scriptsize $-(m+3)$}
\put(97,203){\scriptsize $0$}
\put(75,190){\scriptsize $-(m+4)$}
\put(93,113){\scriptsize $0$}
\put(70,100){\scriptsize $-(m+4)$}
\put(248,95){\scriptsize $0$}
\put(265,110){\scriptsize $-m$}
\put(241,13){\scriptsize $0$}
\put(255,30){\scriptsize $-m$}
\put(120,83){\scriptsize $0$}
\put(140,50){\scriptsize $-m$}
\put(185,300){$\sim$}
\put(250,265){ $\downarrow$}
\put(185,265){\scriptsize $(-1)$-twist along $c_3$}
\put(175,220){ $\longleftarrow$}
\put(155,210){\scriptsize $(-1)$-twist along $c_2$}
\put(120,170){$\wr$}
\put(175,135){ $\longrightarrow$}
\put(165,125){\scriptsize $1$-twist along $c_1$}
\put(255,90){$\wr$}
\put(185,50){$\sim$}
\end{picture}
\caption{$GOF(1;m)$ with monodromy $D_\bd^{1/2}\circ D_\alpha^m$ and an $\alpha$-loop in $M(1;m)$.}
\label{fig:surgerydescriptionGOF(1;m)}
\end{center}
\end{figure}
%%%%%%%%%%
%%%%%%%%%%

%\subsection{Crossing Changes}

Lemma \ref{lem:AllCrossingChangesAlongArcs}, then, provides an immediate corollary to Theorem \ref{thm:Manifolds}.

\ManifoldsForGOFKnotsWithGeneralizedCrossingChanges*

Recall from Theorem \ref{thm:Manifolds} that $GOF(0;k,\ell)$ and $GOF(\pm1;m)$ refer to the GOF-knots with monodromy $D_\beta^\ell\circ D_\alpha^k$ and $(D_\bd)^{1/2} \circ D_\alpha^m$. 

Since a (generalized) crossing change taking one GOF-knot to another must be around a crossing circle bounding a disk that intersects the fiber in an arc, the crossing change is a Dehn surgery along the curve formed by the union of the arc and its image. To see this, start with the arc $\alpha$ and its image $h(\alpha)$, sitting in a fiber $F$. Push them off of $F$ slightly (in the direction of $F \times \set{1}$ in the bundle). Fixing the endpoints, pull $h(\alpha)$ back through the monodromy, and the resulting arc sits just above $\alpha$, but on the other side of $F$. The two sub-arcs together form the crossing circle. Then, \cite{NiDSKPM} describes the way that the monodromy must change when the crossing change is performed. Combining this with Lemma \ref{lemma:FactorizationUnclean}, we have Theorem \ref{thm:CrossingChanges} immediately.
%%%%%%%%%%%%%%%%%%%%%%%%%%%%%%%%%%%%

\section{Classes of clean or once-unclean arcs}
\label{section:ClassesCleanOnceUncleanArcs}
Let $F$ be a compact, oriented, connected surface with boundary, and let $h$ be an orientation preserving self-homeomorphism of $F$ 
such that the restriction of $h$ to the boundary is the identity.
We say two arcs properly embedded in $F$ are \emph{$h$-conjugate} or \emph{monodromy conjugate} if there exists 
an orientation preserving self-homeomorphism $f$ of $F$ which sends one arc to the other and commutes with $h$ (i.e., $f\circ h=h\circ f$). 
We also say two arcs in $F$ are \emph{$h$-equivalent} or \emph{monodromy equivalent} if the image of one arc by some power $h^n$ of $h$ is isotopic to the other arc.
By definition, two arcs are $h$-conjugate if they are $h$-equivalent.
\begin{lem}\label{lem:ConjugateArcs}
Let $K$ be a fibered knot with fiber $F$ and monodromy $h$.
If two arcs $\alpha$ and $\beta$ in $F$ are $h$-conjugate, then the two links $K\cup L_{\alpha}$ and $K\cup L_{\beta}$ are equivalent.
\end{lem}
\begin{proof}
Suppose that the arcs $\alpha$ and $\beta$ in $F$ are $h$-conjugate.
By definition, there exists an orientation preserving self-homeomorphism $f$ of $F$ such that $f(\alpha)=\beta$ and $f\circ\ h=h\circ f$. 
Since $f=h\circ\ f\circ\ h^{-1}$, the map $f:F\times \{0\}\to F\times \{0\}$ can be naturally extended into an orientation preserving self-homeomorphism of the surface bundle $(F\times [0,1])/h$, that is the exterior of $K$. 
For each point $x\in\bd F$, the loop $(\{x\}\times [0,1])/h$ is a meridian of $K$ and is invariant under the extended map. Hence it can be further extended into an orientation preserving self-homeomorphism of  the ambient $3$-manifold. 
This map shows that $K\cup L_{\alpha}$ and $K\cup L_{\beta}$ are equivalent.
\end{proof}

One GOF-knot might be transformed into multiple different GOF-knots by crossing changes when there are multiple clean or once-unclean arcs in a fiber. This would correspond, for instance, to a monodromy having multiple factorizations into the forms of Lemmas \ref{lemma:FactorizationClean} and \ref{lemma:FactorizationUnclean} 
(see Remark \ref{rem:Redundancy}). In this section, we will show that 
there are at most two $h$-conjugacy classes of arcs that are clean, or once-unclean, when there are two they can be realized disjointly in the fiber, and we will describe when this occurs.

This was proven for clean arcs in GOF-knots in \cite{CowLacUGOK}. We follow the arguments found in \cite{CowLacUGOK}, and modify them to the purpose of finding once-unclean arcs in addition to clean arcs.

Let $A$ be a directed arc in $\overline{\mathbb{H}^2}$, directed from endpoint $A_-$ to $A_+$, both in $\bd \mathbb{H}^2$. Following \cite{CowLacUGOK}, say that two distinct vertices of $\AC(F)$ are on the \emph{same side} of $A$ if their corresponding vertices in $\bd \mathbb{H}^2$ are not interleaved with the endpoints of $A$. If $x_1$ and $x_2$ are distinct points of $\bd \mathbb{H}^2$ on the same side of $A$, then $x_1 < x_2$ if $\{x_1, A_+\}$ and $\{x_2, A_-\}$ are interleaved. This defines a total order on points of one side of $A$.

Suppose $\alpha$ is an arc in $F$ that is clean with respect to the monodromy $h$. Let $\beta = h(\alpha)$. Either $\beta = \alpha$, in which case $h$ is a power of a Dehn twist (around $c_\alpha$, see Lemma \ref{lemma:FactorizationClean}), or there is an edge in the arc complex $\AC(F)$ between the vertices corresponding to $\alpha$ and $\beta$.

Now, consider the 1-complex $\alpha \cup \beta$, and let $N$ be a regular neighborhood of $\alpha \cup \beta$ in $F$. Also, consider the two resolutions of the interior intersection between $\alpha$ and $\beta$. One of the resulting 1-complexes will be connected, $R_1$, and the other will not, $R_2$. Let $N_i$ be a regular neighborhood in $F$ of the $R_i$ ($i = 1, 2$).
By Lemma \ref{lemma:FactorizationUnclean}, there are two cases. 

In Case (\ref{case:RVNC}), the frontier of $N_1$ consists of one closed loop (bounding a disk in $F$), and two arcs properly embedded in $F$. That these arcs will be essential and isotopic follows from the proof of Lemma \ref{lemma:FactorizationUnclean}; call either of them $\nu_1$. The frontier of $N_2$ consists of two closed loops (essential in $F$), and two arcs properly embedded in $F$. These arcs will also be essential and isotopic; call either of them $\nu_2$. The arcs $\nu_1$ and $\nu_2$ are disjoint from each other, and each can be made disjoint from both $\alpha$ and $\beta$. Then, in the arc complex, $\AC(F)$, there is a simplex $\Delta_\alpha$, whose vertices correspond to $\alpha, \nu_1$, and $\nu_2$, and there is a simplex $\Delta_\beta$, whose vertices correspond to $\beta, \nu_1$, and $\nu_2$. In particular, $\Delta_\alpha$ and $\Delta_\beta$ share an edge (the edge between the vertices corresponding to $\nu_1$ and $\nu_2$). In this case, we say that the vertices corresponding to $\alpha$ and $\beta$ are \emph{simplex-adjacent}, and call the edge between the vertices corresponding to $\nu_1$ and $\nu_2$ the \emph{common edge}. (Equivalently, we could say that there exist $2$-simplices associated with $\alpha$ and $\beta$ whose corresponding vertices in $\T$ are adjacent. In this case, the edge between these vertices in $\T$ corresponds to the common edge in $\AC(F)$.) 

In Case (\ref{case:RVPC}), two boundary curves of $N$ are embedded in the interior of $F$, and the remaining boundary curve intersects $\bd F$ in two arcs. In this case, the frontier of $N$ consists of two simple closed curves, and again two arcs. One of these arcs is inessential, as it is parallel into the boundary of $F$, and the other, call it $\nu$, is essential and disjoint from both $\alpha$ and $\beta$. Note in this case that, while $\alpha$ and $\beta$ intersect once, from the perspective of the arc complex $\alpha$ and $\beta$ represent vertices connected by an edge, because $\beta$ is properly isotopic to an arc disjoint from $\alpha$, so the vertices representing $\alpha$, $\beta$, and $\nu$ form a simplex in $\AC(F)$.

\begin{thm}
\label{thm:UsuallyAtMost2Classes}
If $h$ has at least two distinct $h$-equivalence classes of arcs which are either once-unclean or clean, then one of the following hold: 
\begin{enumerate}
\item \label{case:Identity} $h=Id$ (resp., $h=(D_{\bd})^{\pm1/2}$). 
In this case, all arcs are clean (fixed) (resp., once-unclean) and $h$-conjugate.
\item \label{case:PeriodicOrder2} 

$h^{\pm1}=D_\delta\circ D_\kappa^2=D_\kappa^2\circ D_\mu$, where $\kappa,\mu$ are disjoint arcs and $\delta=D_\kappa^2(\mu)$. In this case, $\kappa$ and $\mu$ are representatives of all $h$-equivalence classes of such arcs. 
\item \label{case:SquareDehnTwist} 
$h^{\pm1}=D_\kappa^2\circ D_\mu^0=D_\kappa^2\circ D_{\mu'}^0$, where $\kappa,\mu, \mu'$ are mutually disjoint arcs and $\mu'=D_\kappa(\mu)$. 
In this case, $\kappa,\mu$ and $\mu'$ are  
representatives of all $h$-equivalence classes of such arcs, and $\mu,\mu'$ are $h$-conjugate.  
\item \label{case:ReducibleBoundaryTwistSingleDehnTwist} $h^{\pm1}=D_\delta\circ D_\kappa^4=(D_\bd)^{1/2}\circ D_\mu^{-1}$, where $\kappa, \mu$ are disjoint arcs and $\delta=D^2_\kappa(\mu)$. In this case, $\kappa$ and $\mu$ are 
representatives of all $h$-equivalence classes of such arcs.
\item \label{case:ReducibleBoundaryTwistSquareDehnTwist} 
$h^{\pm1}=(D_\bd)^{1/2}\circ D_{\mu}^{-2}=D_\delta^2\circ D^2_{\mu'}=D_{\mu'}^2\circ D_{\mu''}^2$, where $\mu,\mu',\mu''$ are mutually disjoint arcs and $\mu''=D_\mu(\mu'),\delta=D_\mu^{-1}(\mu')$. In this case, $\mu,\mu'$ and $\mu''$ are representatives of all $h$-equivalence classes of such arcs, and $\mu',\mu''$ are $h$-conjugate. %%%
\item \label{case:PseduoAnosovTwoClean} 
$h^{\pm1}=D_\delta^{-1}\circ D_\kappa=D_\kappa\circ D_{\kappa'}^{-1}$, where $\kappa,\kappa'$ are disjoint arcs and $\delta=D_\kappa(\kappa')$. In this case, $\kappa$ and $\kappa'$ are representatives of all $h$-equivalence classes of such arcs. 
\item \label{case:PseudoAnosovOneCleanOneOnceUnclean} $h^{\pm1}=D_\delta^{-2}\circ D_\mu=D_\mu\circ D_\kappa^{-2}$, where $\kappa,\mu$ are disjoint arcs and $\delta=D_\mu(\kappa)$. In this case, $\kappa$ and $\mu$ are representatives of all $h$-equivalence classes of such arcs. 
\item \label{case:PseudoAnosovTwoOnceUnclean} $h^{\pm1}=D_\delta^{-2}\circ D_\mu^2=D_\mu^2\circ D_{\mu'}^{-2}$, where $\mu,\mu'$ are disjoint arcs and $\delta=D_\mu^2(\mu')$. In this case, $\mu$ and $\mu'$ are representatives of all $h$-equivalence classes of such arcs. 
\end{enumerate}
In particular, $h$ has at most two $h$-conjugacy classes of arcs which are either once-unclean or clean.
\end{thm}
\begin{rem}
In Theorem \ref{thm:UsuallyAtMost2Classes}, $\kappa$, $\kappa'$ (resp., $\mu,\mu',\mu''$) are clean arcs (resp., once-unclean arcs). 
The multiple factorizations of $h$ in (\ref{case:PeriodicOrder2})-(\ref{case:PseudoAnosovTwoOnceUnclean}) follow from the well-known formula $D_{f(\alpha)}=f\circ D_\alpha\circ f^{-1}$ for any automorphism $f$ and any arc $\alpha$. 
In case (\ref{case:SquareDehnTwist}), $f=D_\kappa$ commutes with $h^{\pm1}=D_\kappa^2$ and $f(\mu)=\mu'$, thus $\mu$ and $\mu'$ are $h$-conjugate.
Similarly, in case (\ref{case:ReducibleBoundaryTwistSquareDehnTwist}), $f=D_\mu$ commutes with $h^{\pm1}=(D_\bd)^{1/2}\circ D_\mu^{-2}$ and $f(\mu')=\mu''$, thus $\mu'$ and $\mu''$ are $h$-conjugate.
\end{rem}

\begin{proof}
If $h = Id$ (resp., $h=(D_{\bd})^{\pm1/2}$), it is clear that all arcs are clean (resp., once-unclean),
and that all arcs are $h$-conjugate, since any orientation preserving homeomorphism commutes with $h$.
 Otherwise, we consider separately the three homeomorphism types of the monodromy $h$. 

\noindent \emph{Case 1}. $h$ is periodic.

In this case, the induced automorphism of the tree $\T$ fixes a point in $\T$, which must either be a vertex, or a midpoint of an edge. 

If the fixed point is a vertex of $\T$, then this vertex corresponds to a $2$-simplex $\Delta$ in $\AC(F)$, and this corresponds to an ideal triangle in $\mathbb{H}^2$. So $\widetilde{h}$ induces a rotation of $\mathbb{H}^2$ around the center of this ideal triangle. Label the ideal vertices of this triangle $t_0, t_1,$ and $t_2$, so that $\widetilde{h}$ sends $t_i$ to $t_{i+1}$ (mod 3). Then each $t_i$ is a clean arc (in the same equivalence class, as observed in \cite{CowLacUGOK}). Consider, then, a vertex $v$ other than the vertices of the ideal triangle. Without loss of generality, we have $v < t_1 < \widetilde{h}(v)$ with respect to the edge from  $t_0$ to $t_2$. Then, $v$ and $\widetilde{h}(v)$ are on opposite sides of both the edge between $t_0$ and $t_1$, and the edge between $t_1$ and $t_2$, so they cannot be joined by an edge and cannot be simplex-adjacent. The first conclusion guarantees that $v$ is not clean, and the two conclusions together guarantee that $v$ is not once-unclean. Thus, there is one equivalence class of clean arcs, and no once-unclean arcs.

If, on the other hand, the fixed point is a midpoint of an edge of $\T$, say $e'$, then the automorphism fixes $e'$ set-wise, and acts as rotation around the midpoint. Let $e$ be the edge in $\AC(F)$ corresponding to $e'$. Let the endpoints of $e$ be $t_1$ and $t_2$. Then $e$ is an edge between two $2$-simplices $\Delta_1$ and $\Delta_2$ in $\AC(F)$, each with a third vertex, $v_1$ and $v_2$, respectively. Since the endpoints of $e'$ are interchanged by the automorphism, and $e$ is fixed, $v_1$ and $v_2$ must be interchanged. Note that since $\widetilde{h}$ is elliptic, $t_1$ and $t_2$ cannot be fixed, so they must be interchanged as well. Thus, $t_1$ and $t_2$ correspond to a single monodromy equivalence class of clean arcs, and $v_1$ and $v_2$ are simplex-adjacent with common edge $e$, so they correspond to a monodromy equivalence class of once-unclean arcs. For any vertex $v$ off of $e$, $v$ and $\widetilde{h}(v)$ are on opposite sides of $e$, so they cannot be joined by an edge, and the only way that $v$ and $\widetilde{h}(v)$ could be simplex-adjacent would be if $e$ were the common edge between them. So, there is exactly one $h$-equivalence class of clean arcs, as observed in \cite{CowLacUGOK}, and there is exactly one $h$-equivalence class of once-unclean arcs, and these two classes can be realized disjointly since, say, $t_1$ and $v_1$ are joined by an edge. 
The periodic monodromies from Table \ref{table:ClassificationMonodromies} with order 2 are $D_\delta \circ D_\kappa^2$ and $D_\delta^2 \circ D_\mu$, which are conjugate, and each admits one clean arc and one once-unclean arc. In fact, putting $\delta=D_\kappa^2(\mu)$ for disjoint arcs $\kappa,\mu$, we have 
$$D_\delta\circ D_\kappa^2=(D_\kappa^2\circ D_\mu\circ D_\kappa^{-2})\circ D_\kappa^2=D_\kappa^2\circ D_\mu.$$
Hence we have conclusion (\ref{case:PeriodicOrder2}).

\noindent \emph{Case 2}. $h$ is reducible. 

Then $h$ leaves an essential arc in $F$ fixed, up to isotopy, and $\widetilde{h}$ is parabolic. Hence, as an element of $SL(2, \mathbb{Z})$, $\widetilde{h}$ is conjugate to 
\[ \left( \begin{array}{cc} \pm 1 & n \\ 0 & \pm 1 \end{array} \right),\]
with $n \in \mathbb{Z} \rmv \set{0}$. For any homeomorphism $f$ of $F$, \[| int(\alpha) \cap int(h(\alpha)) | = | int(f(\alpha)) \cap int(f(h(\alpha))) |,\] so $\alpha$ is clean or once-unclean with respect to $h$ if and only if $f(\alpha)$ is clean or once-unclean, respectively, with respect to $f\circ h \circ f^{-1}$. Thus, conjugation will not affect the properties in the theorem, so we may assume that $\widetilde{h}$ is given by the matrix above.

Thus $h = (D_\alpha)^n$ or $(D_\bd)^{\pm 1/2} \circ (D_\alpha)^n$ for some $n \in \mathbb{Z} - \set{0}$, up to free isotopy, where $\alpha$ is the arc represented by $1/0$, fixed by $D_\alpha$. 

Let us first consider $h = (D_\alpha)^n$. An arc represented by $p/q$ will be sent to the arc represented by $(p \pm nq)/q$. The arc represented by $1/0$ is, in fact, fixed by $h$, regardless of the value of $n$. The only way there can exist a second clean arc is if $n = \pm 1$, in which case, all arcs represented by an integer are clean and $h$-equivalent, and there are no once-unlcean arcs. Otherwise, an arc is once-unclean only if $| pq - q(p \pm nq)| = |nq^2| = 2$, so $q = \pm 1$, $n = \pm 2$, and there are two $\widetilde{h}$-equivalence classes of vertices corresponding to once-unclean arcs, represented disjointly by the arcs corresponding to $1/1$ and $2/1$. Thus, $D_\kappa^2 = D_\kappa^2 \circ D_\mu^0 = D_\kappa^2 \circ D_{\mu'}^0$ for the two disjoint once-unclean arcs $\mu$ and $\mu'$. The homeomorphism $f=D_{\kappa}$ sends $\mu$ to $\mu'$ (or vice versa), and commutes with the monodromy $h$,
so the $h$-conjugacy class of once-unclean arcs is unique. This is conclusion (\ref{case:SquareDehnTwist}).

On the other hand, to understand $h = (D_\bd)^{1/2} \circ D_\alpha^n$, let us consider the effect of composing $(D_\bd)^{1/2}$ with $D_\alpha^n$.  
An arc that is fixed by $D_\alpha^n$ will be once-unclean with respect to $h$. 
A clean left-veering arc (hence $n<0$) will become a clean right-veering arc with respect to $h$, 
but a clean right-veering arc (hence $n>0$) will have two intersections with its image when composed with $(D_\bd)^{1/2}$ as the boundary twist reinforces the veer. 
A once-unclean left-veering arc (hence $n<0$) will become a once-unclean right-veering arc with respect to $h$, 
but a once-unclean right-veering arc (hence $n>0$) will have three intersections with its image when composed with $(D_\bd)^{1/2}$ as the boundary twist reinforces the veer even more.
Arcs that are neither fixed, clean, nor once-unclean with respect to $(D_\alpha)^n$ will be neither fixed, clean, nor once-unclean with respect to $h$. Hence, the number of classes of arcs that are either fixed, clean, or once-unclean with respect to $(D_\bd)^{1/2} \circ D_\alpha^n$ is less than or equal to the number of such classes for $(D_\alpha)^n$. The analysis of how clean and once-unclean arcs with respect to $(D_\alpha)^n$ extends to $h$ is summarized in Table \ref{table:ComposingBoundaryTwist}.

\begin{table}[h!]
  \begin{center}
    \begin{tabular}{c|c} 
      Monodromy & $h$-equivalence classes of clean or once-unclean arcs \\
      \hline
      $(D_\bd)^{1/2} \circ D_\alpha$ & 1 once-unclean arc\\
      $(D_\bd)^{1/2} \circ D_\alpha^{-1}$ & 1 clean arc, 1 once-unclean arc\\
      $(D_\bd)^{1/2} \circ D_\alpha^2$ & 1 once-unclean arc\\
      $(D_\bd)^{1/2} \circ D_\alpha^{-2}$ & 3 once-unclean arcs\\
      $(D_\bd)^{1/2} \circ D_\alpha^n, |n| > 2$ & 1 once-unclean arc\\
    \end{tabular}
        \caption{A summary of the clean and once-unclean $h$-equivalence classes with respect to $(D_\bd)^{1/2} \circ D_\alpha^n$.}
    \label{table:ComposingBoundaryTwist}
  \end{center}
\end{table}

Consider the two cases of Table \ref{table:ComposingBoundaryTwist} with more than one class of arcs that are clean or once-unclean.

For the monodromy $(D_\bd)^{1/2} \circ D_{\alpha}^{-1}$, putting $\delta=D_\kappa^2(\mu)$ for disjoint arcs $\kappa,\mu$, we have
\begin{align*}
D_\delta\circ D_\kappa^4 = &\ (D_\kappa^2\circ D_\mu\circ D_\kappa^{-2})\circ D_\kappa^4\\
 = &\ D_\kappa \circ (D_\kappa \circ D_\mu \circ D_\kappa) \circ D_\kappa \circ D_\mu \circ D_\mu^{-1} \\
 = &\  D_\kappa \circ (D_\mu \circ D_\kappa \circ D_\mu) \circ D_\kappa \circ D_\mu \circ D_\mu^{-1} \\
 = &\ (D_\bd)^{1/2}\circ D_\mu^{-1},
\end{align*}
where the third step follows from the braid relation, since the curves $c_\kappa$ and $c_\mu$ intersect once.

Then we have conclusion (\ref{case:ReducibleBoundaryTwistSingleDehnTwist}).

The only remaining reducible monodromy, then, admitting multiple $h$-equivalence classes of clean or once-unclean arcs, $(D_\bd)^{1/2} \circ D_{\alpha}^{-2}$, with three once-unclean arcs (one of which is $\alpha$), must be able to be re-factorized as a product of two squares of Dehn twists, as in Table \ref{table:ClassificationMonodromies}. In fact, one can verify that $(D_\bd)^{1/2}\circ D_{\mu}^{-2}=D_{\mu'}^2\circ D_{\mu''}^2$, where $\mu,\mu',\mu''$ are mutually disjoint arcs, each once-unclean with respect to the monodromy, and $\mu'' = D_{\mu}(\mu')$. Further, the homeomorphism $f=D_\mu$ sends $\mu'$ to $\mu''$, and commutes with the monodromy, so there are only two $h$-conjugacy classes of once-unclean arcs.
Thus we have conclusion (\ref{case:ReducibleBoundaryTwistSquareDehnTwist}).

\noindent \emph{Case 3}. $h$ is pseudo-Anosov.

In this case, the induced action on $\mathbb{H}^2$ is loxidromic, having two fixed points on $\bd \mathbb{H}^2$. The fixed points cannot be vertices of $\AC(F)$, because $h$ is not reducible. A loxidromic mapping class has a set-wise fixed axis, which we will call $A$, whose endpoints $A_-$ and $A_+$ on $\bd \mathbb{H}^2$ are the fixed points of $\widetilde{h}$, where $A_-$ is a repelling point, and $A_+$ is an attracting point.

Then $A$ is an edge directed from $A_-$ to $A_+$, which defines a total order on $\bd \mathbb{H}^2$ on either side of $A$, as defined above, which is preserved by $\widetilde{h}$.

Again from \cite{CowLacUGOK}, a vertex $v \in \AC(F)$ is said to be \emph{visible from $A$} if $v$ is adjacent in $\AC(F)$ to a vertex on the opposite side of $A$.

Coward and Lackenby showed that if a vertex and its image are joined by an edge, then both vertices must be visible from $A$, and that there are at most two equivalence classes of such vertices, one for each side of $A$. 
We will show also that if $v$ and $\widetilde{h}(v)$ are simplex-adjacent, then $v$ is visible from the axis $A$. Suppose that $v$ is not visible from $A$. Then, every simplex of which $v$ is a vertex has all vertices on the same side of $A$. Further, since $v$ is not visible from $A$, there exists such a simplex with vertices $v, v_-$, and $v_+$, so that $v_- < v < v_+$. Now, $v_+ < \widetilde{h}(v_+)$, since $\widetilde{h}$ moves points along the circle $\bd \mathbb{H}^2$ away from $A_-$ and towards $A_+$. Since $\{v_-, v_+\}$ and $\{\widetilde{h}(v_-), \widetilde{h}(v_+)\}$ both form the endpoints of edges, they cannot be interleaved. In this case, in order for $v$ and $\widetilde{h}(v)$ to be simplex-adjacent, it would need to be true that the edge between $v_-$ and $v_+$ is the common edge of $v$ and $\widetilde{h}(v)$. But then we would have $\widetilde{h}(v_-) < \widetilde{h}(v) < \widetilde{h}(v_+)$, since $\widetilde{h}$ preserves the order, so the edge between $v_-$ and $\widetilde{h}(v)$ precludes the existence of an edge between $\widetilde{h}(v_-)$ and $\widetilde{h}(v_+)$. Thus, it is impossible for $v$ and $\widetilde{h}(v)$ to be simplex-adjacent.

Next, we will show that there are at most two $h$-equivalence classes of arc that are either clean or once-unclean in $F$. To prove this, we will construct a fundamental domain for the action of $\widetilde{h}$ on $A$, and examine the points visible from such a domain. 
Suppose $v$ and $\widetilde{h}(v)$ are simplex-adjacent, and are therefore both visible from $A$. Call the common edge $e$, and call its endpoints $x$ and $y$. Then $e$ separates $v$ and $\widetilde{h}(v)$, but $x$ and $y$ cannot be on the same side of $A$ since both $v$ and $\widetilde{h}(v)$ are visible from $A$. Say $x$ is on the same side of $A$ as $v$, and $y$ is on the opposite side. Because there is an edge between $v$ and $y$, there is also an edge between $\widetilde{h}(v)$ and $\widetilde{h}(y)$. Then a fundamental domain for the action of $\widetilde{h}$ on $A$ is the interval between the edge from $v$ to $y$ and the edge from $\widetilde{h}(v)$ to $\widetilde{h}(y)$. This interval is divided into three sub-intervals by the edge $e$ (from $x$ to $y$) and the edge from $\widetilde{h}(v)$ to $y$. For each point in the first sub-interval, the only vertices visible on the same side as $v$ are $v$ and $x$; for each point in the second sub-interval, the only vertices visible on the same side as $v$ are $x$ and $\widetilde{h}(x)$; for each point the third sub-interval, the only vertex visible on the same side as $v$ is $\widetilde{h}(v)$. Thus, $v$ and $x$ are the only vertices in distinct $\widetilde{h}$-equivalence classes that are visible from $A$ on this side of $A$. Now, if $x$ and $\widetilde{h}(x)$ were simplex-adjacent, the common edge between them would have to be the edge between $\widetilde{h}(v)$ and $y$. This would imply that $y = \widetilde{h}(y)$, which is impossible as $h$ is pseudo-Anosov. Hence, $v$ determines a unique $\widetilde{h}$-equivalence class of vertices on one side of $A$ corresponding to once-unclean arcs. 

Further, the edge from $\widetilde{h}(v)$ to $y$ prevents $x$ from being joined to $\widetilde{h}(x)$ by an edge, so that $x$ does not represent a clean arc. So, if there is a vertex on one side of $A$ that is simplex-adjacent to its image under $\widetilde{h}$, then there is not a vertex on the same side of $A$ representing a clean arc.

Thus, either $F$ has at most two $h$-equivalence classes of clean arcs, and no once-unclean arcs; at most two $h$-equivalence classes of once-unclean arcs, and no clean arcs; or at most one $h$-equivalence class of clean arcs and at most one $h$-equivalence class of once-unclean arcs, and in all cases the arcs can be realized disjointly.

Finally, if we can identify an arc that is clean, $\kappa$, its image $h(\kappa)$, and two arcs $\delta$ and $\delta'$ that are each disjoint from $\kappa$ and $h(\kappa)$ that intersect each other once, then the monodromy has a second arc that is either clean or once-unclean exactly when one of $\delta$ or $\delta'$ is clean or once-unclean. 

Similarly, if we can identify an arc that is once-unclean $\mu$, its image $h(\mu)$, and two arcs $\delta$ and $\delta'$ that are each disjoint from $\mu$ and $h(\mu)$ and from each other, then the monodromy has a second arc that is either clean or once-unclean exactly when one of $\delta$ or $\delta'$ is clean or once-unclean. 

Thus, for each pseudo-Anosov monodromy in Table \ref{table:ClassificationMonodromies}, we can identify the required arcs $\delta$ and $\delta'$. There are two $h$-equivalence classes of arcs that are both clean for $D_\delta^{-1} \circ D_\kappa$, where $\delta$ can be seen to be clean, giving rise to conclusion (\ref{case:PseduoAnosovTwoClean}). There are $h$-equivalence classes of arcs that are clean and once-unclean for $D_{\delta}^{-2} \circ D_\mu$, where $\delta$ can be seen to be clean, giving rise to conclusion (\ref{case:PseudoAnosovOneCleanOneOnceUnclean}). And there are two $h$-equivalence classes of arcs that are once-unclean for $D_\delta^{-2} \circ D_\mu^2$, where $\delta$ can be seen to be once-unclean, giving rise to conclusion (\ref{case:PseudoAnosovTwoOnceUnclean}).
\end{proof}

Corollary \ref{cor:GOFequivalence} now follows immediately from Theorem \ref{thm:UsuallyAtMost2Classes}, Lemma \ref{lem:ConjugateArcs}, and Theorem \ref{thm:Manifolds}.

%%%%%%%%%%%%%%%%%%%%%%%%%%%%%%%%%%%%

\section*{Appendix}
\label{section:Appendix}

Fig. \ref{fig:euivL(4,1)} shows us the two knots $GOF(-1;1)$ and $GOF(0;-4,-1)$ are equivalent in $L(4,1)$.
%by the Kirby calculus.
%%%%%%%%%%
\begin{figure}[h!]
\begin{center}
\includegraphics[scale=.8]{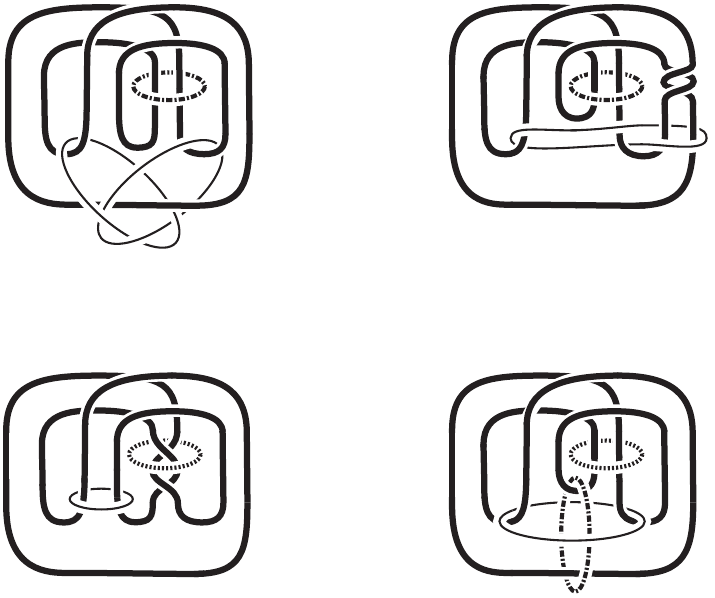}

\begin{picture}(400,0)(0,0)
\put(70,135){$GOF(-1;1)$}
\put(60,10){$GOF(0;-4,-1)$}
\put(125,150){\scriptsize $0$}
\put(70,150){\scriptsize $-1$}
\put(260,180){\scriptsize $4$}
\put(250,30){\scriptsize $4$}
\put(85,40){\scriptsize $4$}
\put(175,180){$\longrightarrow$}
\put(265,130){$\wr$}
\put(175,60){$\sim$}
\end{picture}
\caption{The GOF-knots $GOF(-1;1)$ and $GOF(0;-4,-1)$ in $L(4,1)$ are equivalent.
Compare with Fig. \ref{fig:crossingchangeL(4,1)}.}
\label{fig:euivL(4,1)}
\end{center}
\end{figure}
%%%%%%%%%%

Fig. \ref{fig:equivL(2,1)L(2,1)} shows us the two knots $GOF(1;-2)$ and $GOF(0;-2-2)$ are equivalent in $L(2,1)\ \#\ L(2,1)$.

%%%%%%%%%%
\begin{figure}[h!]
\begin{center}
\includegraphics[scale=.8]{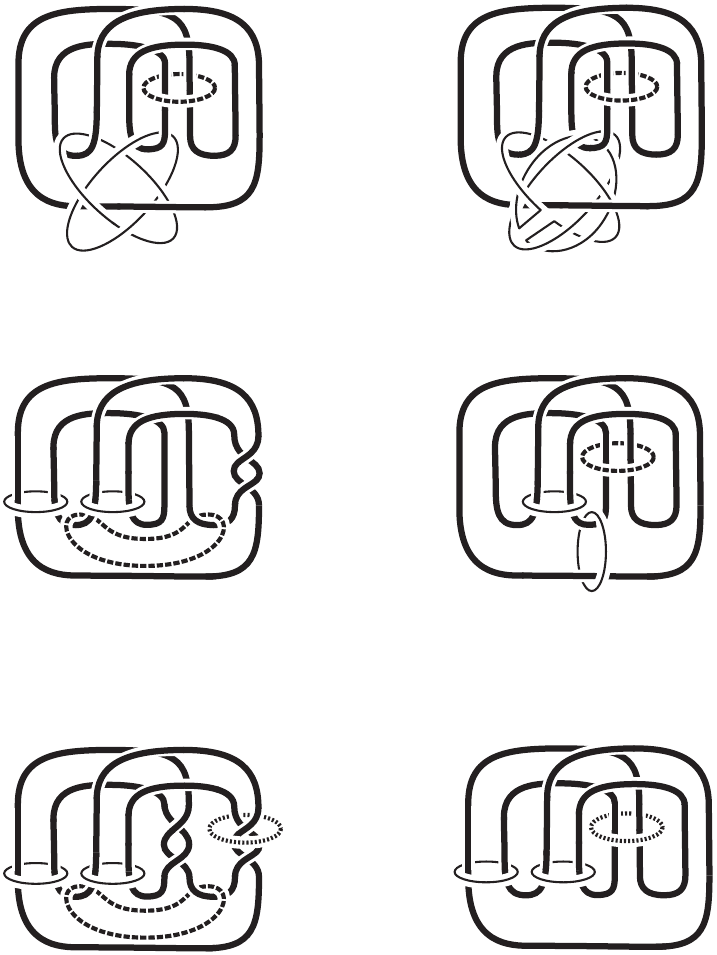}

\begin{picture}(400,0)(0,0)
\put(80,270){$GOF(1;-2)$}
\put(240,0){$GOF(0;-2,-2)$}
\put(120,285){\scriptsize $0$}
\put(60,285){\scriptsize $-2$}
\put(290,285){\scriptsize $2$}
\put(230,285){\scriptsize $-2$}
\put(260,198){\scriptsize $2$}
\put(260,170){\scriptsize $-2$}
\put(90,198){\scriptsize $2$}
\put(57,198){\scriptsize $-2$}
\put(90,55){\scriptsize $2$}
\put(60,55){\scriptsize $2$}
\put(230,55){\scriptsize $-2$}
\put(260,55){\scriptsize $-2$}
\put(180,340){$\longrightarrow$}
\put(270,260){$\wr$}
\put(180,190){$\sim$}
\put(100,120){$\downarrow$}
\put(180,50){$\longleftarrow$}
\end{picture}
\caption{The GOF-knots $GOF(1;-2)$ and $GOF(0;-2-2)$ in $L(2,1)\ \#\ L(2,1)$ are equivalent: 
The fifth diagram in Fig. \ref{fig:surgerydescriptionGOF(1;m)} is used for representing $GOF(1;-2)$.
Compare with Fig. \ref{fig:crossingchangeL(2,1)L(2,1)}.}
\label{fig:equivL(2,1)L(2,1)}
\end{center}
\end{figure}

Fig. \ref{fig:prism} discribes the $3$-manifold $M(1;m)$ by the Kirby calculus.
%%%%%%%%%%
\begin{figure}[h!]
\begin{center}
\includegraphics[scale=.8]{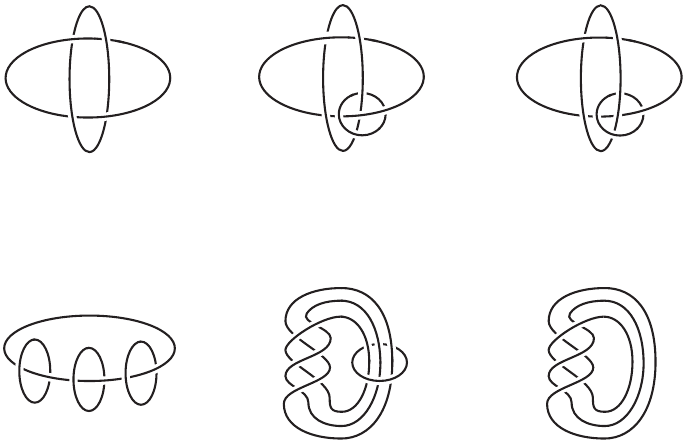}

\begin{picture}(400,0)(0,0)
\put(87,185){\scriptsize $0$}
\put(100,170){\scriptsize $-m$}
\put(185,185){\scriptsize $0$}
\put(200,170){\scriptsize $-m$}
\put(202,128){\scriptsize $\frac{1}{0}$}
\put(280,185){\scriptsize $-1$}
\put(298,170){\scriptsize $-m-1$}
\put(297,128){\scriptsize $-1$}
\put(255,20){\scriptsize $-1$}
\put(238,36){\scriptsize $-m-1$}
\put(255,48){\scriptsize $-1$}
\put(153,20){\scriptsize $-1$}
\put(136,36){\scriptsize $-m-1$}
\put(153,48){\scriptsize $-1$}
\put(213,43){\scriptsize $\frac{1}{0}$}
\put(105,20){\scriptsize $-2$}
\put(70,20){\scriptsize $-m-2$}
\put(55,20){\scriptsize $-2$}
\put(80,68){\scriptsize $-1$}
\put(130,150){$\longrightarrow$}
\put(230,150){$\longrightarrow$}
\put(285,100){$\wr$}
\put(230,50){$\longleftarrow$}
\put(130,50){$\longleftarrow$}
\end{picture}
\caption{The $3$-manifold in which the GOF-knot $GOF(1;m)$ sits:
The result of a surgery along the $(2,4)$-torus link with surgery coefficient $0$ and $-m$ is homeomorphic to the Seifert fibered space with Seifert invariants $(-1; (2, 1), (2, 1), (m+2, 1))$.
Compare with Figure 2 in \cite{BalChiMacNiOchVafPMRP}.
}
\label{fig:prism}
\end{center}
\end{figure}
%%%%%%%%%%
%\input{Appendix.tex}

%%%%%%%%%%%%%%%%%%%%%%%%%%%%%%%%%%%%%
%
\clearpage
%
%\bibliographystyle{plain}
%
%\bibliography{CompleteClassificationOfGeneralizedCrossingChangesBetweenGOFKnots}
%

\end{document}